\def\leq {\leqslant}
\def\le {\leqslant}
\def\ge {\geqslant}
\def\geq {\geqslant}
\def\@bibitem[#1]#2{\item\@biblabel{#1}.\if@filesw
{\def\protect##1{\string##1\space}\immediate\write
\@auxout{\string\bibcite{#2}{#1}}}\fi\ignorespaces\@showtag{#2}}
\theoremstyle{plain}
\newtheorem{theorem}{Theotem}[section]
\newtheorem{rem}{Remark}[section]
\newtheorem{lemma}{Lemma}[section]
\newtheorem{op}{Definition}[section]
\renewcommand{\theequation}%
{\arabic{section}.\arabic{equation}}
\begin{document}
\pagestyle{myheadings}
\thispagestyle{empty}

\begin{center}
{\sc G. Akishev}
\end{center}

\begin{center}
{\bf Mixed modulus of smoothness and best approximation in Lorentz spaces
}
\end{center}

\vspace*{0.2 cm}

\begin{quote}
\noindent{\bf Abstract. }
The article deals with the mixed modulus of smoothness of positive order and the best approximation by ''angle'' of functions from the Lorentz space $L_{p, \tau}(\mathbb{T}^{m})$. The properties of the mixed modulus of smoothness, the sharp version of the inverse theorem of approximation theory are proved.
\end{quote}

\textbf{Keywords}: Lorentz space, trigonometric polynomial, best ''angular'' approximation,
smoothness modulus.

AMS Subject Classification: 41A10, MSC 41A25, 42A05

\vspace*{0.2 cm}

\section*{Introduction} 

Let $\mathbb{N}$, $\mathbb{Z}$, $\mathbb{R}$ be sets of natural, integer, and real numbers, respectively, and $\mathbb{Z}_{+} = \mathbb{N }\cup\{0\}$, and $\mathbb{R}^{m}$ be an $m$--dimensional Euclidean space of points $\overline{x}=(x_{1},\ldots,x_{m})$ with real coordinates;
$\mathbb{T}^{m}=[0, 2\pi)^{m}$ and $\mathbb{I}^{m}=[0, 1)^{m}$ be the $m $--dimensional cubes. Further, $\mathbb{Z}^{m}$ and $\mathbb{Z}_{+}^{m}$ are $m$--fold Cartesian product of the sets $\mathbb{Z}$ and $\mathbb{Z}_{+}$, respectively.

 \smallskip
We denote by $L_{p,\tau}(\mathbb{T}^{m})$
the Lorentz space of all real-valued Lebesgue measurable functions $f$ that have $2\pi$--period in each variable and for which the quantity
\begin{equation*}
\|f\|_{p,\tau} = \left(\int\limits_{0}^{1}(f^{*}(t))^{\tau}t^{\frac{\tau}{p}-1}dt
\right)^{\frac{1}{\tau}} , \,\, 1<
p<\infty, 1\leqslant \tau <\infty,
\end{equation*}
is finite, where $f^{*}(t)$ is a non-increasing rearrangement of the function $|f(2\pi\overline{x})|$, $\overline{x} \in \mathbb{I}^{m }$ (see \cite[Ch.~1, sec. 3, P.~213--216]{22}).

In the case $\tau=p$, the Lorentz space $L_{p,\tau}(\mathrm{T}^{m})$ coincides with the Lebesgue space $L_{p}(\mathbb{T}^{m})$ with the norm (see \cite[Ch.~1, Sec.~1.1]{13}
\begin{equation*}
	\|f\|_{p} = \Biggl[\int_{0}^{1}...\int_{0}^{1}|f(2\pi x_{1},\dots, 2\pi x_{m})
	|^{p}dx_{1}\dots dx_{m}\Biggr]^{\frac{1}{p}}, \,\, 1\leq p<\infty.
\end{equation*}

We denote by ${\mathring L}_{p, \tau}(\mathbb{T}^{m})$  the set of all functions $f\in
L_{p, \tau}(\mathbb{T}^{m})$
such that
\begin{equation*}
\int\limits_{0}^{1}f(2\pi \overline{x}) dx_{j}  =0,\;\;
j=1,...,m .
\end{equation*}

Below $a_{\overline{n}}(f)$ are the Fourier coefficients of a function $f\in {\mathring L}_{1}\left(\mathbb{T}^{m} \right)$ with respect  the system $\{e^{i\langle\overline{n}, 2\pi\overline{x}\rangle}\}_{\overline{n}\in\mathbb{Z}^{m}}$ and $\langle\overline{y},\overline{x}\rangle=\sum\limits_{j=1}^{m}y_{j} x_{j}$;
$$
	\delta_{\overline{s}}(f, 2\pi\overline{x})
	= \sum\limits_{\overline{n} \in \rho \left( \overline{s} \right)}a_{\overline{n} }(f) e^{i\langle\overline{n}, 2\pi\overline{x}\rangle },
$$
where
$$
	\rho (\overline{s})=\left\{\overline{k} =\left( k_{1}, \dots, k_{m} \right) \in \mathrm{Z}^{m}\colon [2^{s_{j} -1}] \leq \left| k_{j} \right|<2^{s_{j} } ,j=1, \dots, m\right\},
$$
and $[a]$ is the integer part of $a$, $\overline{s} = (s_{1}, \dots, s_{m}), s_{j} = 0, 1, 2,\ldots$.
Further, $\mathbb{Z}_{+}^{m}$ is the set of points with non-negative integer coordinates.
The value
\begin{equation*}
Y_{l_{1},\ldots,l_{m}}(f)_{p, \tau} = \inf_{T_{l_{j}}} \|f-\sum_{j=1}^{m}T_{l_{j}}\|_{p, \tau} \;\;,\;\; l_{j} = 0,1,2,...
\end{equation*}
is called the best approximation by ''angle'' of a function $f\in {\mathring L}_{p, \tau}(\mathbb{T}^{m})$ by trigonometric polynomials, where $T_{l_{j}} \in {\mathring L}_{p, \tau}(\mathbb{T}^{m})$ is a trigonometric polynomial of order $l_{j}$ with respect to the variable $x_{j}, \,\, j = 1,\ldots,m$ (in the case $\tau=p$, see  \cite{14}--\cite{17}).

We consider the standard basis $\{\overline{e}_{k}\}_{k=1}^{m}$ in $\mathbb{R}^{m}$.

{\bf Definition} (see, for example, \cite{10}, \cite{17} and the bibliography therein).
Let $\alpha\in (0, \infty)$. For a function $f \in L(\mathbb{T}^{m})$, the positive order difference $\alpha$ with respect to the variable $x_{k}$ with step $h \in \mathbb{R}$ is determined by the formula
\begin{equation*}
\Delta_{h}^{\alpha}f(\overline{x})=\sum_{\nu=0}^{\infty}(-1)^{\nu}
\left(\begin{smallmatrix}\alpha \\ \nu \end{smallmatrix}\right) f(\overline{x}+(\alpha - \nu)h\overline{e}_{k}),
\end{equation*}
where $\left(\begin{smallmatrix}\alpha \\ \nu \end{smallmatrix}\right)=1$ for $\nu=0$, $\left(\begin{smallmatrix}\alpha \\ \nu \end{smallmatrix}\right)=\alpha$ for $\nu=\alpha$ and $\left(\begin{smallmatrix}\alpha \\ \nu \end{smallmatrix}\right)=\frac{\alpha(\alpha -1)\cdot \ldots \cdot (\alpha-\nu+1)}{\nu !}$ for $\nu\geqslant\alpha$.

The mixed difference of positive orders $\alpha_{k}$ with respect to the variable $x_{k}$ of a function $f \in L(\mathbb{T}^{m})$ is defined by induction as
\begin{equation*}
\Delta_{h_{m}}^{\alpha_{m}}(\Delta_{h_{m-1}}^{\alpha_{m-1}}(...(\Delta_{h_{1}}^{\alpha_{1}}f(\overline{x}))...))=\sum_{\nu_{m}=0}^{\infty}...\sum_{\nu_{1}=0}^{\infty}(-1)^{\nu_{1}+...+\nu_{m}}\prod_{j=1}^{m}
\left(\begin{smallmatrix}\alpha_{j} \\ \nu_{j} \end{smallmatrix}\right) f(\overline{x}+\sum_{i=1}^{m}(\alpha_{i} - \nu_{i})h_{i}\overline{e}_{i})
\end{equation*}
and denoted by the symbol $\Delta_{\overline{h}}^{\overline\alpha}f(\overline{x})$.

{\bf Definition} The mixed modulus of smoothness of order $\overline\alpha$ of a function $f\in L_{p, \tau}(\mathbb{T}^{m})$ is defined by the formula (in the case $\tau=p$, see \cite{10}, \cite[Ch. 1, Se. 11]{6})
\begin{equation*}
\omega_{\overline\alpha}(f, \overline{t})_{p, \tau}:=\omega_{\alpha_{1},...\alpha_{m}}(f, t_{1},..., t_{m})_{p, \tau}=\sup_{|h_{1}|\leq t_{1},...,|h_{m}|\leq t_{m}}\|\Delta_{\overline h}^{\overline\alpha} (f)\|_{p, \tau}.
\end{equation*}

For functions of one several variables, the concept of the modulus of smoothness of natural order and its properties are well known (see, for example, the monographs \cite{13}, \cite[Ch. 1, Sec. 11]{6}, \cite{24}, \cite[p. 97]{25}). The modulus of smoothness of positive order of one variable function $f\in L_{p}(\mathbb{T})$ is defined in \cite{8}, \cite{23} and extended to symmetric spaces in \cite{1}, \cite{26} (see also the bibliography in \cite{21}). Sharp inequalities between moduli of smoothness in different metrics of the Lebesgue space are established in \cite{20}, \cite{11}.

In the case $\tau=p$, the mixed modulus of smoothness of positive order of a function $f\in L_{p}(\mathbb{T}^{m})$ was defined and its properties were studied in \cite{17}, \cite{10}. The generalized mixed modulus of smoothness of natural order of a function $f\in L_{p}(\mathbb{T}^{m})$ was defined and its properties were studied by K.V. Runovskii and N.V. Omelchenko \cite{19}.

The development of the problem of direct and inverse theorems of approximation by trigonometric polynomials of a function $f\in L_{p}(\mathbb{T})$, $0<p<\infty$ are described in detail in the article by V.I. Ivanov \cite{9} and in the monograph by M.F. Timan \cite{25}. These questions for approximation by ''angle'' are investigated in \cite{14}--\cite{17}. The direct and inverse theorems of approximation by ''angle'' of periodic functions in the Lorentz space for a mixed modulus of smoothness of natural order are proved in \cite{2}, \cite{3}.

In the proposed article, one equivalent relation for the mixed modulus of smoothness of positive order of a function $L_{p, \tau}(\mathbb{T}^{m})$, $1<p, \tau<\infty$ is established, moreover, an improved version of the inverse theorem of approximation by ''angle'' in this space and the accuracy of this theorem in the space of functions with lacunary Fourier series are proved.

By $C(p,q,r,y)$ we denote positive constants depending on
the parameters indicated in parentheses, generally speaking, different in different
formulas.
For positive values $A(y), B(y)$, $A(y) \asymp
B(y)$ means that
there are positive numbers $C_{1},\,C_{2}$ such that $C_{1}A(y) \leq B(y) \leq C_{2}A(y)$. For the sake of brevity, in the case $B\ge C_{1}A$ or $B\le C_{2}A$, we often write $B \gg A$ or $B \ll A$, respectively.
  \smallskip

\setcounter{equation}{0}
\setcounter{lemma}{0}
\setcounter{theorem}{0}

\section{Auxiliary statements}\label{sec1}  

\smallskip

First, we introduce additional notation and give auxiliary statements.
Denote by $e_{m}$ the set of indices $\{1, \dots, m\}$, its arbitrary subset by $e$ and the number of elements of $e$ by $|e|$.
Here $\overline{r} =(r_{1},\ldots,r_{m})$ is an element of an $m$-dimensional space with
non-negative coordinates, and $\overline{r}^{e} =(r_{1}^{e},\ldots,r_{m}^{e})$
is the vector with components $r_{j}^{e} = r_{j}$ for $j\in e$ and $r_{j}^{e}= 0$ for $j\notin e.$

Let $\overline{l} = (l_{1},\ldots,l_{m})$ be an element of an $m$--dimensional space with
positive integer coordinates and a nonempty set $e\subset e_{m}.$
We set 
 \begin{equation*}
G_{\overline{l}}(e) = \{ \overline{k} = (k_{1},\ldots,k_{m})\in \mathbb{Z}^{m} :
|k_{j}|\le l_{j}, j\in e \quad |k_{j}| > l_{j}, j\notin e \}.
\end{equation*}

We consider various partial sums of Fourier series:
 \begin{equation*}
S_{\overline{l}}(f, 2\pi\overline{x}) = S_{l_{1},\ldots,l_{m}}(f, 2\pi\overline{x}) =
\sum\limits_{|k_{1}|\le l_{1}}\ldots\sum\limits_{|k_{m}|\le l_{m} }
a_{\overline{k}} (f) e^{i\langle\overline{k} , 2\pi\overline{x}\rangle}
 \end{equation*}
is the partial sum with respect to all variables;
 \begin{equation*}
S_{l_{j}, \infty}(f, 2\pi\overline{x}) = \sum\limits_{k_{1}=-\infty }^{+\infty}\ldots \sum\limits_{k_{j-1}=-\infty}^{+\infty}
\sum\limits_{|k_{j}|\le l_{1}}\sum\limits_{k_{j+1}=-\infty }^{+\infty}\ldots\sum\limits_{k_{m}=-\infty}^{+\infty}
a_{\overline{k}}(f) e^{i\langle\overline{k} , 2\pi\overline{x}\rangle}
 \end{equation*}
is the partial sum with respect to $ x_{j}\in [0, 1)$. 

In the general case, 
\begin{equation*}
S_{\overline{l}^{e}, \infty}(f, 2\pi\overline{x}) =
\sum\limits_{\overline{k} \in \prod_{j \in e} [-l_{j}, l_{j}]\times \mathbb{R}^{m-|e|} }
a_{\overline{k}}(f) e^{i\langle\overline{k} , 2\pi\overline{x}\rangle}
 \end{equation*}
is the partial sum with respect to $x_{j}\in [0, 1)$ for $j\in e.$ 
For a given subset of $e\subset e_{m}$, we put 
\begin{equation*}
U_{\overline{l}}(f, 2\pi\overline{x}) = \sum\limits_{e\subset e_{m}, e \neq \emptyset} \;\; \sum\limits_{\overline{k} \in G_{\overline{l}}(e)} a_{\overline{k} } (f) e^{i\langle\overline{k} ,2\pi\overline{x}\rangle}.
 \end{equation*}
In particular, for $m=2$, we have (see, for example, \cite{6})
 \begin{equation*}
U_{l_{1},l_{2}}(f, 2\pi\overline{x}) = S_{l_{1} ,\infty }(f, 2\pi\overline{x}) + S_{\infty,l_{2}  }(f, 2\pi\overline{x}) - S_{l_{1},l_{2}  }(f, 2\pi\overline{x}).
  \end{equation*}
 
 For a function $f\in \mathring{L}(\mathbb{T}^{m})$ and a vector $\overline{\alpha} = (\alpha_{1}, \ldots , \alpha_{m})$ with non-negative coordinates, the fractional differentiation operator is defined by the formula (see \cite[Chap. 3, Sec. 15]{4}) 
\begin{equation*}
f^{(\overline{\alpha})}(\overline{x}):= f^{(\alpha_{1}, \ldots , \alpha_{m})}(\overline{x}) = \sum_{\overline{n}\in \mathring{\mathbb{Z}}^{m}} \prod_{j=1}^{m}(i n_{j})^{\alpha_{j}}a_{\overline{n}}(f) e^{i\langle\overline{n} , \overline{x}\rangle},
\end{equation*}
where 
 $\mathring{\mathbb{Z}}^{m} = \{\overline{n} \in \mathbb{Z}^{m}: \, \, \prod_{j=1}^{m}n_{j} \neq 0 \}$ and $(i n_{j})^{\alpha_{j}} = |n_{j}|^{\alpha_{j}}e^{i\frac{\pi}{2}\alpha_{j} sign n_{j}}$, $j=1,..., m$.
    
Below we present some properties of the mixed modulus of smoothness of a function, which are proved by well-known methods as in \cite{17}, \cite{10}, \cite{25}, \cite{2}.
\begin{lemma}\label{lem1 1}
Let 
 $1 < p < +\infty$, $1\leqslant \tau<\infty$, $\alpha_{j}\in (0, \infty)$ for $j=1, \ldots , m$ 
and $f, g \in L_{p, \tau}(\mathbb{T}^{m})$. Then 

1. $\omega_{\overline{\alpha}}(f, \overline\delta^{e})_{p, \tau}=\omega_{\overline{\alpha}}(f, \overline{0})_{p, \tau}=0$,

2. $\omega_{\overline{\alpha}}(f + g, \overline\delta)_{p, \tau}\ll\omega_{\overline{\alpha}}(f, \overline\delta)_{p, \tau} + \omega_{\overline{\alpha}}(g, \overline\delta)_{p, \tau}$;
 
3. $\omega_{\overline{\alpha}}(f, \overline\delta)_{p, \tau}\leqslant \omega_{\overline{\alpha}}(f, \overline{t})_{p, \tau} \, \, \text{for} \, \, 0\leqslant \delta_{j}<t_{j}, \overline{t}=(t_{1}, \ldots , t_{m})$; 

4. $\prod_{j=1}^{m}\delta_{j}^{-\alpha_{j}}\omega_{\overline{\alpha}}(f, \overline\delta)_{p, \tau}\leqslant \prod_{j=1}^{m}t_{j}^{-\alpha_{j}}\omega_{\overline{\alpha}}(f, \overline{t})_{p, \tau}$ for  $0<t_{j}\leqslant \delta_{j}\leqslant 1$,   $j=1, \ldots , m$; 

5. $\omega_{\overline{\alpha}}(f, \lambda_{1}\delta_{1}, ...,\lambda_{m}\delta_{m})_{p, \tau} \ll \prod_{j=1}^{m}\lambda_{j}^{\alpha_{j}}\omega_{\overline{\alpha}}(f, \delta_{1}, ...,\delta_{m})_{p, \tau}$,
for numbers 
$\lambda_{j} \geq 1$, $j=1,...,m$;

6. $\omega_{\overline{\beta}}(f, \overline\delta)_{p, \tau}\leqslant \omega_{\overline{\alpha}}(f, \overline\delta)_{p, \tau}$ for $0<\alpha_{j}<\beta_{j}$, $j=1,...,m$, $\overline\beta=(\beta_{1}, \ldots,\beta_{m})$. 
\end{lemma}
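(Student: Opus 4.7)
The plan is to derive all six assertions directly from the definition of the mixed difference $\Delta_{\overline h}^{\overline\alpha}$, the triangle inequality in $L_{p,\tau}(\mathbb{T}^m)$, and the one-dimensional properties of the fractional difference $\Delta_h^\alpha$, following the scheme of \cite{17}, \cite{10}, \cite{25}, \cite{2} with the routine adjustments needed to pass from the Lebesgue to the Lorentz norm.

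For item 1, if some coordinate of the step vector vanishes, say $h_k=0$, then the $k$-th one-dimensional factor in the mixed difference equals $f(\overline x)\sum_{\nu=0}^{\infty}(-1)^{\nu}\binom{\alpha_k}{\nu}=f(\overline x)(1-1)^{\alpha_k}=0$, so the entire mixed difference is identically zero and both equalities follow. Item 2 is the linearity of $\Delta_{\overline h}^{\overline\alpha}$ combined with the triangle inequality for $\|\cdot\|_{p,\tau}$, which in the range $1<p<\infty$, $1\le\tau<\infty$ holds up to an absolute constant absorbed into ``$\ll$''. Item 3 is immediate because enlarging the intervals $|h_j|\le t_j$ enlarges the set over which the supremum is taken.

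Items 4 and 5 rest on a one-dimensional scaling bound of Marchaud type, namely
$$
\|\Delta_{\lambda_j h_j}^{\alpha_j} g\|_{p,\tau} \ll \lambda_j^{\alpha_j}\sup_{|h_j'|\le |h_j|}\|\Delta_{h_j'}^{\alpha_j} g\|_{p,\tau},\qquad \lambda_j\ge 1,
$$
applied successively in each variable. To obtain it I would write $\lambda_j=n_j+r_j$ with $n_j=[\lambda_j]$, $0\le r_j<1$, and express $\Delta_{\lambda_j h_j}^{\alpha_j}g$ through the standard representation as an absolutely summable linear combination of translates of $\Delta_{h_j'}^{\alpha_j} g$ whose coefficient sequence has $\ell^1$-mass $\asymp \lambda_j^{\alpha_j}$ (for natural $\alpha_j$ this is the elementary identity iterating $\Delta_{nh}^1g(\cdot)=\sum_{0\le k<n}\Delta_h^1g(\cdot+kh)$; for fractional $\alpha_j$ it is the analogue used in \cite{17}, \cite{10}, \cite{25}). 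Since the translation is an isometry on $L_{p,\tau}(\mathbb{T}^m)$, taking norms and then suprema yields the bound. Iterating over $j=1,\ldots,m$ gives item 5. Item 4 follows by inserting $\lambda_j=\delta_j/t_j\ge 1$ into item 5 and rearranging.

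Item 6 is obtained from the composition identity $\Delta_{h_j}^{\beta_j}=\Delta_{h_j}^{\beta_j-\alpha_j}\Delta_{h_j}^{\alpha_j}$, valid because the generalised binomial coefficients $\binom{\beta_j-\alpha_j}{\nu}$ are absolutely summable for $\beta_j-\alpha_j>0$. Applying this in every variable, using translation-invariance and the triangle inequality, produces $\|\Delta_{\overline h}^{\overline\beta}f\|_{p,\tau}\ll\|\Delta_{\overline h}^{\overline\alpha}f\|_{p,\tau}$, and taking the supremum finishes item 6. The principal technical point is the scaling estimate underlying item 5: for fractional $\alpha_j$ the representation of $\Delta_{\lambda_j h_j}^{\alpha_j}$ in terms of $\Delta_{h_j'}^{\alpha_j}$ requires a careful bookkeeping of the $\Gamma$-coefficients and the absolute convergence of the corresponding binomial series. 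Once this estimate is available in $L_{p,\tau}$, which needs only $L^1$-summability of the kernel together with translation-invariance, the remaining items are routine consequences of it and of the one-dimensional arguments already worked out for $L_p$ in the cited references.
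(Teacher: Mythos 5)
Your proposal is correct and follows essentially the same route the paper relies on: the paper gives no proof of Lemma~\ref{lem1 1} at all, stating only that the properties ``are proved by well-known methods as in [17], [10], [25], [2]'', and your sketch accurately reconstructs those methods (vanishing of $(1-1)^{\alpha}$ for item~1, quasi-triangle inequality for item~2, monotonicity of the supremum for item~3, the one-dimensional $\lambda^{\alpha}$-scaling representation of $\Delta_{\lambda h}^{\alpha}$ for items~4--5, and the Vandermonde composition $\Delta_h^{\beta}=\Delta_h^{\beta-\alpha}\Delta_h^{\alpha}$ for item~6), with the only genuinely delicate ingredient --- the fractional-order scaling identity --- deferred to the same references, exactly as the paper does. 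You correctly note that items~4 and~6 then come with an implicit constant (i.e.\ with $\ll$ rather than the literal $\leqslant$ printed in the lemma), and that item~1 tacitly assumes $e\neq e_{m}$ so that at least one step coordinate vanishes; both are imprecisions of the statement, not gaps in your argument.
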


Let 
 $n_{j}\in \mathbb{N}$, $j=1,...,m$. We consider 
the trigonometric polynomial  
 \begin{equation}\label{eq1.1}
T_{\overline{n}}(2\pi\overline{x}) = {\sum\limits_{k_{1}=-n_{1}}^{n_{1}}}' \ldots {\sum\limits_{k_{m}=-n_{m}}^{n_{m}}}'c_{\overline{k}}e^{i\langle\overline{k} , 2\pi\overline{x}\rangle}=\sum\limits_{\overline{0}<|\overline{k}|\leqslant\overline{n}}c_{\overline{k}}e^{i\langle\overline{k} , 2\pi\overline{x}\rangle}, \, \,  \overline{x}\in \mathbb{I}^{m}, 
\end{equation} 
where 
 $|\overline{k}|=(|k_{1}|,...,|k_{m}|)$ and the notation $\overline{0}<|\overline{k}|\leqslant\overline{n}$ means that $0<|k_{j}|\leqslant |n_{j}|$ for $j=1, \dots, m$.

\begin{lemma}\label{lem1 2} 
Let $1 < p < +\infty$, $1\leqslant \tau<\infty$,
$\alpha_{j}\in (0, \infty)$ for $j=1, \ldots , m$. Then for the derivative $T_{\overline{n}}^{(\alpha_{1},...,\alpha_{m})}(2\pi\overline{x})$ 
of the trigonometric polynomial $T_{\overline{n}}(2\pi\overline{x})$ of the form \eqref{eq1.1} 
the following inequalities hold:
\begin{equation*}
 \omega_{\overline{\alpha}}(T_{\overline{n}}, \delta_{1}, ...,\delta_{m})_{p, \tau}\ll 
\prod_{j=1}^{m}\delta_{j}^{\alpha_{j}}\|T_{\overline{n}}^{(\alpha_{1},...,\alpha_{m})}\|_{p, \tau}.,
\end{equation*}
\begin{equation*}
\|T_{\overline{n}}^{(\alpha_{1},...,\alpha_{m})}\|_{p, \tau}\ll \prod_{j=1}^{m}n_{j}^{\alpha_{j}}\|\Delta_{\frac{\pi}{n_{m}}}^{\alpha_{m}}(...(\Delta_{\frac{\pi}{n_{1}}}^{\alpha_{1}}T_{\overline{n}})...)\|_{p, \tau}.
\end{equation*}
\end{lemma}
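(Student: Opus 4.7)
The two inequalities are a mixed fractional Marchaud-type bound and a mixed fractional Bernstein-type bound for trigonometric polynomials on the Lorentz space. In both cases I would reduce matters to a one-dimensional statement about a convolution operator, then bootstrap to the mixed setting one variable at a time, exploiting that $\Delta_{h_j}^{\alpha_j}$, partial fractional differentiation in the $j$-th variable, and one-variable convolution all commute when they act on different variables. The key functional-analytic input is the Young-type convolution inequality in the Lorentz space,
\begin{equation*}
\|K\ast_{j} f\|_{p,\tau}\ll \|K\|_{1}\,\|f\|_{p,\tau},\qquad 1<p<\infty,\ 1\leq\tau<\infty,
\end{equation*}
applied variable by variable; this is classical and reduces everything to estimating $L_{1}$-norms of explicit periodic kernels.

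\textbf{First inequality.} For a one-dimensional trigonometric polynomial $T$ and $\alpha>0$ one has the Fourier-side identity $\widehat{\Delta_{h}^{\alpha}T}(k)=(e^{ihk}-1)^{\alpha}\widehat{T}(k)$ and $\widehat{T^{(\alpha)}}(k)=(ik)^{\alpha}\widehat{T}(k)$. Setting up the multiplier $\mu_{h,\alpha}(k)=(e^{ihk}-1)^{\alpha}/(ik)^{\alpha}$ and using that $|e^{i\theta}-1|\asymp|\theta|$ on $[-\pi,\pi]$, one realizes $\Delta_{h}^{\alpha}T=M_{h,\alpha}\ast T^{(\alpha)}$ with a periodic kernel $M_{h,\alpha}$ whose $L_{1}$-norm is $\ll|h|^{\alpha}$ (a standard construction going back to Lizorkin/Trigub/Runovski). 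Applied in each variable $x_{j}$ to $T_{\overline{n}}$, this yields
\begin{equation*}
\Delta_{\overline{h}}^{\overline{\alpha}}T_{\overline{n}}=M_{h_{1},\alpha_{1}}^{(1)}\ast_{1}\cdots \ast_{m-1} M_{h_{m},\alpha_{m}}^{(m)}\ast_{m}T_{\overline{n}}^{(\overline{\alpha})},
\end{equation*}
and iterated application of the Lorentz Young inequality gives $\|\Delta_{\overline{h}}^{\overline{\alpha}}T_{\overline{n}}\|_{p,\tau}\ll \prod_{j}|h_{j}|^{\alpha_{j}}\|T_{\overline{n}}^{(\overline{\alpha})}\|_{p,\tau}$. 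Taking the supremum over $|h_{j}|\leq\delta_{j}$ completes the first estimate.

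\textbf{Second inequality.} Here I go in the opposite direction: for a one-variable trigonometric polynomial of degree at most $n$, I construct a periodic kernel $\Phi_{n,\alpha}$ with $\|\Phi_{n,\alpha}\|_{1}\ll n^{\alpha}$ such that on the spectrum $\{0<|k|\leq n\}$ its Fourier coefficients equal $(ik)^{\alpha}/(e^{ik\pi/n}-1)^{\alpha}$, and hence $T^{(\alpha)}=\Phi_{n,\alpha}\ast\Delta_{\pi/n}^{\alpha}T$. The standard way is to define this multiplier on the spectrum and extend it smoothly by a de la Vall\'ee Poussin cut-off on a twice-larger frequency band, checking the $L_{1}$-norm bound via a Bernstein/Jackson estimate (again this is classical, worked out in the scalar case by Lizorkin, Timan, Runovski, etc.). Since convolutions and fractional differentiations in distinct variables commute, one obtains in the mixed setting
\begin{equation*}
T_{\overline{n}}^{(\overline{\alpha})}=\Phi_{n_{1},\alpha_{1}}^{(1)}\ast_{1}\cdots\ast_{m-1}\Phi_{n_{m},\alpha_{m}}^{(m)}\ast_{m}\Delta_{\pi/n_{m}}^{\alpha_{m}}\!\bigl(\dots(\Delta_{\pi/n_{1}}^{\alpha_{1}}T_{\overline{n}})\dots\bigr),
\end{equation*}
and iterated Lorentz-Young gives the desired bound with the product $\prod_{j}n_{j}^{\alpha_{j}}$ in front.

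\textbf{Main obstacle.} The genuine technical point is the construction and $L_{1}$-norm control of the multiplier kernels $M_{h,\alpha}$ and $\Phi_{n,\alpha}$ for \emph{fractional} $\alpha$; once these are in hand, the whole mixed, Lorentz-space version follows mechanically by tensor-product arguments and the $L_{1}$-convolution bound on $L_{p,\tau}$. In practice one cites the classical one-dimensional constructions (as the author himself hints by referring to the methods of \cite{17}, \cite{10}, \cite{25}, \cite{2}) rather than redoing them.
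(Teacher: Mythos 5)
Your proposal is correct in outline and rests on the same underlying idea as the paper's proof -- transference between $T_{\overline{n}}^{(\overline{\alpha})}$ and $\Delta_{\overline{h}}^{\overline{\alpha}}T_{\overline{n}}$ by averaging translates with controlled total mass, done one variable at a time and closed by the translation-invariance of the Lorentz norm -- but the concrete device differs. For the first inequality the paper gives no argument at all: it simply cites Theorem~4.1 of \cite{17}, whereas you sketch the multiplier realization $\Delta_{h}^{\alpha}T=M_{h,\alpha}\ast T^{(\alpha)}$ with $\|M_{h,\alpha}\|_{1}\ll|h|^{\alpha}$ plus a Young-type inequality in $L_{p,\tau}$. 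For the second inequality the paper does not build a continuous kernel $\Phi_{n,\alpha}$; it imports from \cite{23}, \cite{18} the ready-made discrete identity
\begin{equation*}
T_{n}^{(\alpha)}(x)=\sum_{l=-\infty}^{\infty}d_{l}\,\Delta_{h}^{\alpha}T_{n}\Bigl(x+\frac{l\pi}{n}-\frac{\alpha h}{2}\Bigr),\qquad \sum_{l}|d_{l}|\ll n^{\alpha},
\end{equation*}
iterates it in each variable, and concludes by the triangle inequality and shift-invariance of $\|\cdot\|_{p,\tau}$. Your convolution kernel is just the continuous analogue of this absolutely summable family of shifted differences (a measure of total variation $\ll n^{\alpha}$), so the two routes are mathematically equivalent; the paper's version has the practical advantage that the identity is already stated in the cited literature in exactly the needed form, while yours requires constructing, or at least locating, the $L_{1}$ bounds for $M_{h,\alpha}$ and $\Phi_{n,\alpha}$ for fractional $\alpha$ -- which, as you correctly flag, is the only genuinely technical point and is deferred to the classical one-dimensional literature in both treatments.
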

\begin{proof} The first inequality in the Lemma is proved as Theorem 4.1 in \cite{17}. 
Let us prove the second inequality. For a positive-order derivative of the trigonometric polynomial of one variable 
  \begin{equation*}
 T_{n}(x) = {\sum\limits_{k=-n}^{n}}'c_{k}e^{ikx}
 \end{equation*}
 it is known that (see, for example, \cite{23}, \cite{18})
   \begin{equation}\label{eq1.2}
 T_{n}^{(\alpha)}(x) =\sum_{l=-\infty}^{\infty}d_{l}\Delta_{h}^{\alpha}T_{n}\bigl(x+\frac{l\pi}{n}-\frac{\alpha h}{2}\bigr),
 \end{equation}
where  
$\sum_{l=-\infty}^{\infty}|d_{l}|\ll n^{\alpha}$, $\alpha > 0$. We use the equality \eqref{eq1.2} $m$ times for a
the polynomial of the form \eqref{eq1.1} we have
   \begin{equation}\label{eq1.3}
T_{\overline{n}}^{(\overline\alpha)}(\overline{x})=\sum_{\nu_{1}=-\infty}^{\infty}...\sum_{\nu_{m}=-\infty}^{\infty}\prod_{j=1}^{m}d_{\nu_{j}}\Delta_{\overline{h}}^{\overline\alpha}T_{\overline{n}}\Bigl(x_{1}+\frac{l_{1}\pi}{n_{1}}-\frac{\alpha_{1}h_{1}}{2},..., x_{m}+\frac{l_{m}\pi}{n_{m}}-\frac{\alpha_{m}h_{m}}{2}\Bigr),
\end{equation} 
where 
 $\sum_{l_{j}=-\infty}^{\infty}|d_{l_{j}}|\ll n_{j}^{\alpha_{j}}$, $\alpha_{j} > 0$.
Next, using equality \eqref{eq1.3}, by virtue of the triangle inequality and the invariance of the norm in the Lorentz space with respect to the shift (see for example \cite{7}, \cite{12}), we obtain 
   \begin{equation*}
  \|T_{\overline{n}}^{(\overline\alpha)}\|_{p, \tau}\ll \prod_{j=1}^{m}\sum_{\nu_{j}=-\infty}^{\infty}|d_{\nu_{j}}|\|\Delta_{\overline{h}}^{\overline\alpha}T_{\overline{n}}\|_{p, \tau} \ll \prod_{j=1}^{m}n_{j}^{\alpha_{j}}\|\Delta_{\overline{h}}^{\overline\alpha}T_{\overline{n}}\|_{p, \tau},
   \end{equation*}
for $h_{j}=\frac{\pi}{n_{j}}$,  $j=1,...,m$.
\end{proof}
\begin{rem}\label{rem1.1}
In the case $\tau = p$, Lemma 1.2 was previously proved in \cite[Theorem 5. 1]{17}.
\end{rem}
For $e\subset e_{m}$, by $T_{\overline{n}^{e}, \infty}(\overline{x})$ we denote the trigonometric polynomial of order at most $n_{j}\in \mathbb{N}$ in the variable $x_{j}$ for $j\in e$. In the case $e=e_{m}$, the polynomial $T_{\overline{n}^{e}, \infty}(\overline{x})$ is defined by the equality \eqref{eq1.1}.
\begin{lemma}\label{lem1 3}
Let 
 $1 < p < +\infty$, $1\leqslant \tau<\infty$,
$\alpha_{j}\in (0, \infty)$ for
 $j\in e$. 
 Then for the derivative 
of the trigonometric polynomial $T_{\overline{n}^{e}, \infty}(2\pi\overline{x})$ 
the inequality holds
  \begin{equation*}
 \|T_{\overline{n}^{e}, \infty}^{(\overline{\alpha}^{e})}\|_{p, \tau}\ll \prod_{j\in e}n_{j}^{\alpha_{j}}\|\Delta_{\overline{h}^{e}(\overline{n})}^{\overline{\alpha}^{e}}T_{\overline{n}^{e}, \infty}\|_{p, \tau} 
\end{equation*}
for 
 $\overline{h}^{e}(\overline{n})=(h_{1}^{e}(n_{1}),\ldots , h_{m}^{e}(n_{m}))$, $h_{j}^{e}(n_{j})=\frac{\pi}{n_{j}}$,  $j\in e$.
\end{lemma}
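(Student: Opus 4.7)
The plan is to mimic the proof of Lemma \ref{lem1 2}, but to apply the Bernstein-type identity \eqref{eq1.2} only in the coordinate directions indexed by $e$. For each $j\in e$, when the remaining variables $\{x_{i}\}_{i\neq j}$ are frozen, the function $T_{\overline{n}^{e},\infty}(\overline{x})$ is a trigonometric polynomial in $x_{j}$ of order at most $n_{j}$ (its Fourier coefficients in $x_{j}$ are supported in $|k_{j}|\leq n_{j}$). Hence the one-dimensional identity \eqref{eq1.2} applies in that variable with coefficients $\{d_{l}\}$ satisfying $\sum_{l}|d_{l}|\ll n_{j}^{\alpha_{j}}$, and those coefficients depend only on $n_{j}$ and $\alpha_{j}$, not on the frozen variables.

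Iterating \eqref{eq1.2} successively over $j\in e$ (the fractional differentiation operators in distinct variables commute, and each step only acts on the single variable of the polynomial structure), one obtains the analogue of \eqref{eq1.3}:
\begin{equation*}
T_{\overline{n}^{e},\infty}^{(\overline{\alpha}^{e})}(\overline{x})
=\sum_{(\nu_{j})_{j\in e}\in\mathbb{Z}^{|e|}}\Bigl(\prod_{j\in e}d_{\nu_{j}}\Bigr)
\Delta_{\overline{h}^{e}(\overline{n})}^{\overline{\alpha}^{e}}T_{\overline{n}^{e},\infty}\Bigl(\overline{x}+\sum_{j\in e}\bigl(\tfrac{\nu_{j}\pi}{n_{j}}-\tfrac{\alpha_{j}h_{j}}{2}\bigr)\overline{e}_{j}\Bigr),
\end{equation*}
with $h_{j}=\pi/n_{j}$ for $j\in e$. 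This is the key structural identity.

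From here the norm estimate is routine: by the triangle inequality in $L_{p,\tau}(\mathbb{T}^{m})$ and the translation invariance of the Lorentz norm (used already in the proof of Lemma \ref{lem1 2}), each summand contributes $\|\Delta_{\overline{h}^{e}(\overline{n})}^{\overline{\alpha}^{e}}T_{\overline{n}^{e},\infty}\|_{p,\tau}$, and the sum of $\prod_{j\in e}|d_{\nu_{j}}|$ is bounded by $\prod_{j\in e}n_{j}^{\alpha_{j}}$, yielding exactly the claimed inequality.

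The only point that requires any care, and which I expect to be the main (minor) obstacle, is the first step: justifying that \eqref{eq1.2} can be invoked separately in each variable $x_{j}$, $j\in e$, even though $T_{\overline{n}^{e},\infty}$ is not a trigonometric polynomial in the variables outside $e$. This is because the identity \eqref{eq1.2} is an identity between functions obtained by applying a convolution operator (in the variable $x_{j}$) with kernel $\sum_{l}d_{l}\delta_{\alpha h/2-l\pi/n}\ast (\Delta_{h}^{\alpha}\cdot)$, and the derivative $\partial^{\alpha_{j}}_{x_{j}}$ is likewise a Fourier multiplier in $x_{j}$ only; both operations commute with arbitrary behavior in the remaining variables. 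Once this observation is recorded, the scheme of Lemma \ref{lem1 2} goes through verbatim.
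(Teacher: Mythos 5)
Your proposal is correct and follows essentially the same route as the paper: the paper likewise applies the one-dimensional identity \eqref{eq1.2} in each variable $x_{j}$, $j\in e$, to obtain exactly the representation \eqref{eq1.4} you wrote down, and then concludes by the triangle inequality and the shift-invariance of the Lorentz norm. Your extra remark justifying why \eqref{eq1.2} may be invoked variable-by-variable (as a Fourier multiplier acting in $x_{j}$ only) is a point the paper leaves implicit, and is a worthwhile addition.
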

\proof
For each variable $x_{j}$ for $j\in e$ using equality \eqref{eq1.2} we obtain
   \begin{equation}\label{eq1.4}
T_{\overline{n}^{e}, \infty}^{(\overline{\alpha}^{e})}(\overline{x})=\sum_{\nu_{j}=-\infty, j\in e}^{\infty}\prod_{j\in e}d_{\nu_{j}}\Delta_{\overline{h}^{e}}^{\overline{\alpha}^{e}}T_{\overline{n}^{e}, \infty}\Bigl(\overline{x}+\sum_{j\in e}\Bigl(\frac{l_{j}\pi}{n_{j}}-\frac{\alpha_{j}h_{j}}{2}\Bigr)\overline{e}_{j}\Bigr),
\end{equation}
where 
 $\sum_{l_{j}=-\infty}^{\infty}|d_{l_{j}}|\ll n_{j}^{\alpha_{j}}$, $\alpha_{j} > 0$.
Next, using equality \eqref{eq1.4} according to the triangle inequality and the invariance of the norm in the Lorentz space with respect to the shift (see for example \cite{7}, \cite{12}), we obtain the statement of Lemma 1. 3. 
  \hfill $\Box$
 
\begin{rem}\label{rem1.2}
In the case $\tau = p$ and $m=2$, Lemma 1.3 was previously proved in \cite{17} and in the Lebesgue space with mixed norm in \cite{18}.
 \end{rem}

\begin{lemma}\label{lem1 4}
(Bernstein's inequality). 
Let $1 <p, \tau < +\infty,$ $\alpha_{j}\in \mathbb{Z}_{+}$ for $j=1, \ldots , m$. Then for a trigonometric polynomial $T_{\overline{n}}$, the following inequality is true
\begin{equation*}
\|T_{\overline{n}}^{(\alpha_{1},...,\alpha_{m})}\|_{p, \tau}\ll\prod_{j=1}^{m}(n_{j}+1)^{\alpha_{j}} \|T_{\overline{n}}\|_{p, \tau}
\end{equation*}
\end{lemma}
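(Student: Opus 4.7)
The plan is to derive this Bernstein-type inequality directly from the second inequality of Lemma~\ref{lem1 2}, exploiting the fact that $\alpha_j \in \mathbb{Z}_+$ makes the mixed difference operator a \emph{finite} linear combination of translates.

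First, I would dispose of the degenerate case: if some $n_j = 0$, the primed sum in \eqref{eq1.1} gives $T_{\overline{n}} \equiv 0$ and the inequality is trivial. So assume $n_j \ge 1$ for all $j$, and apply Lemma~\ref{lem1 2} with $h_j = \pi/n_j$ to obtain
\begin{equation*}
\|T_{\overline{n}}^{(\alpha_1,\ldots,\alpha_m)}\|_{p,\tau} \ll \prod_{j=1}^{m} n_j^{\alpha_j} \, \bigl\|\Delta_{\pi/n_m}^{\alpha_m}(\ldots(\Delta_{\pi/n_1}^{\alpha_1} T_{\overline{n}})\ldots)\bigr\|_{p,\tau}.
\end{equation*}
The entire problem then reduces to bounding the norm of the mixed difference by a constant multiple of $\|T_{\overline{n}}\|_{p,\tau}$.

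Next, since each $\alpha_j$ is a non-negative integer, the binomial coefficient $\binom{\alpha_j}{\nu_j}$ vanishes for $\nu_j > \alpha_j$, so the infinite sums in the definition of the mixed difference collapse to finite sums over $0 \le \nu_j \le \alpha_j$. The mixed difference thus becomes a finite linear combination, with coefficients $\prod_j \binom{\alpha_j}{\nu_j}$ of absolute value independent of $\overline{n}$, of shifts of $T_{\overline{n}}$. The Lorentz norm $\|\cdot\|_{p,\tau}$ is translation invariant (since the non-increasing rearrangement is unaffected by a shift; cf.\ \cite{7}, \cite{12}), so the triangle inequality yields
\begin{equation*}
\bigl\|\Delta_{\pi/n_m}^{\alpha_m}(\ldots(\Delta_{\pi/n_1}^{\alpha_1} T_{\overline{n}})\ldots)\bigr\|_{p,\tau} \le \prod_{j=1}^{m}\Bigl(\sum_{\nu_j=0}^{\alpha_j}\binom{\alpha_j}{\nu_j}\Bigr)\|T_{\overline{n}}\|_{p,\tau} = \prod_{j=1}^{m} 2^{\alpha_j}\,\|T_{\overline{n}}\|_{p,\tau}.
\end{equation*}

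Combining the two displays and absorbing $2^{\alpha_j}$ into the implicit constant (and replacing $n_j$ with $n_j + 1$, which is a harmless upper estimate) produces the claimed inequality. There is no serious obstacle: the substantive analytic content is packaged in Lemma~\ref{lem1 2}, and the only subtlety worth emphasizing is that the restriction to integer $\alpha_j$ is precisely what keeps the difference operator finite, so that translation invariance of the Lorentz norm suffices without any additional summability argument. The proof works uniformly in $\tau \in (1,\infty)$ because translation invariance of $\|\cdot\|_{p,\tau}$ does not depend on $\tau$.
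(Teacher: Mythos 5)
The paper itself contains no proof of Lemma 1.4: the lemma is stated bare, and Remark 1.3 only records that the case $\tau=p$ is proved in \cite{14}, \cite{17}. So there is no in-paper argument to compare yours against; judged on its own, your derivation is correct and is the natural way to obtain the lemma from what the paper does prove. Both of your key points check out: the second inequality of Lemma 1.2 with $h_j=\pi/n_j$ reduces the problem to bounding the mixed difference, and for integer $\alpha_j$ the coefficient $\binom{\alpha_j}{\nu_j}$ acquires the factor $\alpha_j-\alpha_j=0$ as soon as $\nu_j>\alpha_j$, so $\Delta_{\overline h}^{\overline\alpha}T_{\overline n}$ collapses to a finite combination of translates with total coefficient mass $\prod_{j}2^{\alpha_j}$; translation invariance of the non-increasing rearrangement, hence of $\|\cdot\|_{p,\tau}$, then gives the bound $\prod_{j}2^{\alpha_j}\|T_{\overline n}\|_{p,\tau}$ uniformly in $\tau$. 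The one hypothesis mismatch you should patch is $\alpha_j=0$: Lemma 1.4 permits $\alpha_j\in\mathbb{Z}_{+}$, whereas Lemma 1.2 is stated for $\alpha_j\in(0,\infty)$. Since $\Delta_h^{0}$ is the identity and $n_j^{0}=1$, those directions are inert, but formally you should either apply Lemma 1.3 with $e=\{j:\alpha_j>0\}$ (the polynomial $T_{\overline n}$ qualifies as a $T_{\overline n^{e},\infty}$) or remark explicitly that the coordinates with $\alpha_j=0$ drop out of both sides. Your treatment of the degenerate case $n_j=0$ is fine, since the primed sum in \eqref{eq1.1} is then empty.
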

 
\begin{lemma}(see \cite[Lemma 3]{3} )\label{lem1 5}
Let 
 $1 < p < +\infty,$\,\,$1 < \tau < +\infty$ and $f\in {\mathring L}_{p, \tau}(\mathbb{T}^{m})$. Then 
 \begin{equation*}
\|f - U_{l_{1},...,l_{m}} (f)\|_{p, \tau}\ll Y_{l_{1},...,l_{m}}(f)_{p, \tau}.
\end{equation*}
\end{lemma}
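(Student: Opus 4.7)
The plan is to represent $f - U_{\overline{l}}(f)$ as $(I - U_{\overline{l}})$ applied to the error $f - T^{*}$ of a near-best angular approximation, and then to establish the uniform boundedness of $U_{\overline{l}}$ on $L_{p,\tau}(\mathbb{T}^{m})$.

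First I would show that $U_{\overline{l}}$ fixes every angular polynomial. Observe that $I - U_{\overline{l}}$ is the Fourier projection onto the ``corner'' $G_{\overline{l}}(\emptyset) = \{\overline{k}\colon |k_{i}| > l_{i}\text{ for all }i\}$. If $T_{l_{j}}$ has order at most $l_{j}$ in $x_{j}$, its Fourier coefficients are supported in $\{|k_{j}|\le l_{j}\}$, which is disjoint from $G_{\overline{l}}(\emptyset)$; hence $U_{\overline{l}}(T_{l_{j}}) = T_{l_{j}}$, and by linearity $U_{\overline{l}}\bigl(\sum_{j=1}^{m} T_{l_{j}}\bigr) = \sum_{j=1}^{m} T_{l_{j}}$. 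Choosing $T^{*} = \sum_{j=1}^{m} T^{*}_{l_{j}}$ with $\|f - T^{*}\|_{p,\tau} \le 2\, Y_{\overline{l}}(f)_{p,\tau}$ therefore yields
\begin{equation*}
f - U_{\overline{l}}(f) = (I - U_{\overline{l}})(f - T^{*}),
\end{equation*}
and the lemma reduces to proving the uniform bound $\|U_{\overline{l}} g\|_{p,\tau} \ll \|g\|_{p,\tau}$ in $\overline{l}$.

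Next, I would express $U_{\overline{l}}$ through one-variable partial sums. Since $S_{\overline{l}^{e},\infty}$ projects onto $\bigcup_{e'\supset e} G_{\overline{l}}(e')$, M\"obius inversion applied to the partition $\mathbb{Z}^{m} = \bigcup_{e\subset e_{m}} G_{\overline{l}}(e)$ gives
\begin{equation*}
U_{\overline{l}}(f) = \sum_{\emptyset \neq e\subset e_{m}} (-1)^{|e|+1} S_{\overline{l}^{e},\infty}(f),
\end{equation*}
which in the case $m=2$ recovers the displayed identity $U_{l_{1},l_{2}} = S_{l_{1},\infty} + S_{\infty,l_{2}} - S_{l_{1},l_{2}}$. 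Each $S_{\overline{l}^{e},\infty}$ is the composition, over $j\in e$, of a one-dimensional partial-sum operator acting only in the variable $x_{j}$, so the uniform $L_{p,\tau}(\mathbb{T}^{m})$-boundedness of $U_{\overline{l}}$ follows from a uniform bound for any one-variable partial sum on $L_{p,\tau}(\mathbb{T}^{m})$.

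The main obstacle is this last step. I would derive it from the boundedness of the periodic Riesz projection on the one-dimensional Lorentz space $L_{p,\tau}(\mathbb{T})$ for $1 < p < \infty$ and $1 < \tau < \infty$, obtained by real interpolation from the classical $L_{p}$-bound of M.~Riesz. To transfer the one-variable estimate to $L_{p,\tau}(\mathbb{T}^{m})$, I would freeze the remaining variables and apply a Fubini-type argument for rearrangement-invariant norms (equivalently, use the vector-valued extension of the one-variable bound), which yields a constant independent of $l$. Combining this with the previous two steps and passing to the infimum over $T^{*}$ completes the proof.
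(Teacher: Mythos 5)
The paper itself gives no proof of this lemma: it is imported verbatim from \cite[Lemma 3]{3}, so there is no internal argument to compare against. Your proposal is the standard self-contained proof and its skeleton is sound: $I-U_{\overline{l}}$ is indeed the Fourier projection onto $G_{\overline{l}}(\emptyset)$, it annihilates every sum $\sum_{j}T_{l_{j}}$, the inclusion--exclusion identity $U_{\overline{l}}=\sum_{\emptyset\neq e\subset e_{m}}(-1)^{|e|+1}S_{\overline{l}^{e},\infty}$ is correct (and reduces everything to one-variable partial sums), and the lemma then follows from the uniform $L_{p,\tau}$-boundedness of those partial sums applied to a near-minimizer $f-T^{*}$. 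The one step you must tighten is the transfer of the one-dimensional bound to $L_{p,\tau}(\mathbb{T}^{m})$: for $\tau\neq p$ the Lorentz norm on the product is \emph{not} an iterated norm, so ``freeze the remaining variables and apply a Fubini-type argument'' is not literally available --- this is exactly the point where the Lorentz case differs from the Lebesgue case $\tau=p$. The correct order of operations is to use Fubini only at the Lebesgue level, obtaining uniform boundedness of the one-variable partial sum on $L_{q}(\mathbb{T}^{m})$ for every $1<q<\infty$, and then to apply real interpolation to the $m$-dimensional operator itself, using $L_{p,\tau}(\mathbb{T}^{m})=\bigl(L_{p_{0}}(\mathbb{T}^{m}),L_{p_{1}}(\mathbb{T}^{m})\bigr)_{\theta,\tau}$; your parenthetical ``vector-valued extension'' remark points in this direction but should replace, not accompany, the Fubini step. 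With that reordering the argument is complete, and it is presumably the same proof as in \cite{3}.
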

\begin{lemma}(Direct theorem, see \cite[lemma 4]{3})
\label{lem1 6}. Let $\alpha_{j}> 0$ for $j=1,\ldots,m$. If
 $f\in \mathring{L}_{p, \tau}(\mathbb{T}^{m})$,  $1<p<+\infty,$
$1<\tau <+\infty$, then 
 \begin{equation*}
Y_{\overline{n}}(f)_{p, \tau}\ll \omega_{\overline{\alpha}}
\Bigl(f,\frac{1}{n_{1}+1},...,\frac{1}{n_{m}+1}\Bigr)_{p, \tau}.
 \end{equation*}
\end{lemma}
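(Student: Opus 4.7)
The plan is to build an explicit angle approximant of $f$ from a tensor product of one-variable fractional Jackson operators and then control the residual by the mixed modulus of smoothness through a kernel representation. The one-variable ingredient is a Jackson-type operator $J_{j}=J_{n_{j},\alpha_{j}}$ acting only in the variable $x_{j}$, sending $f$ to a trigonometric polynomial of degree $\le n_{j}$ in $x_{j}$, and satisfying an identity of the form
\begin{equation*}
(I-J_{j})f(\overline{x})
= \int K_{n_{j},\alpha_{j}}(h_{j})\,
\Delta_{h_{j}\overline{e}_{j}}^{\alpha_{j}} f\!\bigl(\overline{x}+c_{j} h_{j}\overline{e}_{j}\bigr)\,dh_{j},
\end{equation*}
with $K_{n_{j},\alpha_{j}}$ supported in $|h_{j}|\le c/n_{j}$ and $\int |K_{n_{j},\alpha_{j}}(h_{j})|\,dh_{j}\ll 1$. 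Such a $J_{j}$ is constructed by standard means from the fractional-difference identity \eqref{eq1.2} used in Lemma~\ref{lem1 2}.

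Because the operators $J_{j}$ act on disjoint variables they commute, and expansion gives
\begin{equation*}
f - \prod_{j=1}^{m}(I-J_{j})f
= \sum_{\emptyset\neq e\subset e_{m}} (-1)^{|e|+1}\Bigl(\prod_{j\in e} J_{j}\Bigr)f.
\end{equation*}
Every summand is a trigonometric polynomial of degree $\le n_{j}$ in $x_{j}$ for each $j\in e$; assigning the term indexed by $e$ to the component $T_{n_{j^{*}}}$ with $j^{*}=\min e$ produces an angle polynomial $\sum_{j=1}^{m} T_{n_{j}}$. Consequently
\begin{equation*}
Y_{\overline{n}}(f)_{p,\tau}
\le \Bigl\|f-\sum_{j=1}^{m} T_{n_{j}}\Bigr\|_{p,\tau}
= \Bigl\|\prod_{j=1}^{m}(I-J_{j})f\Bigr\|_{p,\tau}.
\end{equation*}

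To finish, iterate the kernel representation:
\begin{equation*}
\prod_{j=1}^{m}(I-J_{j})f(\overline{x})
= \int \prod_{j=1}^{m} K_{n_{j},\alpha_{j}}(h_{j})\,
\Delta_{\overline{h}}^{\overline{\alpha}} f\!\Bigl(\overline{x}+\sum_{j=1}^{m} c_{j} h_{j}\overline{e}_{j}\Bigr)\,d\overline{h}.
\end{equation*}
Minkowski's integral inequality in the Lorentz norm together with its shift-invariance (both valid for $1<p,\tau<\infty$, as already exploited in the proofs of Lemmas~\ref{lem1 2} and \ref{lem1 3}) give
\begin{equation*}
\Bigl\|\prod_{j=1}^{m}(I-J_{j})f\Bigr\|_{p,\tau}
\ll \sup_{|h_{j}|\le c/n_{j}}\|\Delta_{\overline{h}}^{\overline{\alpha}} f\|_{p,\tau}
\ll \omega_{\overline{\alpha}}\!\Bigl(f,\tfrac{c}{n_{1}},\ldots,\tfrac{c}{n_{m}}\Bigr)_{p,\tau},
\end{equation*}
and a final application of Lemma~\ref{lem1 1}(5) absorbs the constant $c$ while passing from $1/n_{j}$ to $1/(n_{j}+1)$. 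The main obstacle is the construction of the one-variable fractional Jackson operator $J_{j}$ with the prescribed integral form; once that is in hand, the combinatorial reassembly into an angle polynomial and the estimation via Minkowski and translation invariance proceed routinely.
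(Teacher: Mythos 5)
The paper itself offers no proof of Lemma~\ref{lem1 6} --- it is imported verbatim from \cite[Lemma 4]{3} --- so there is no internal argument to compare with; I can only assess your proposal on its own terms. Your outer scheme (inclusion--exclusion over $\prod_{j}(I-J_{j})$, assigning each term $\prod_{j\in e}J_{j}f$ to the coordinate $\min e$ to form an admissible angle polynomial, then Minkowski's integral inequality and translation invariance of $\|\cdot\|_{p,\tau}$) is exactly Potapov's classical proof of the angle Jackson theorem for integer orders, and that part is sound once the one-dimensional ingredient exists.

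The genuine gap is that ingredient, and for non-integer $\alpha_{j}$ it is not obtainable ``by standard means from \eqref{eq1.2}'' in the form you prescribe. Identity \eqref{eq1.2} goes in the opposite direction: it represents the fractional derivative of a \emph{polynomial} through its differences (an inverse Bernstein-type formula); it does not produce an operator $J_{n,\alpha}$ whose range consists of polynomials of degree $\le n$. The classical construction $J_{n}f=-\int\chi_{n}(t)\sum_{\nu\ge1}(-1)^{\nu}\binom{r}{\nu}f(x+\nu t)\,dt$ yields a polynomial only because the shifts $\nu t$ are \emph{integer} multiples of $t$, so each term is a convolution of $f$ with an integer dilate of the polynomial kernel $\chi_{n}$. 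For fractional $\alpha$ the difference $\Delta_{h}^{\alpha}f(x)=\sum_{\nu}(-1)^{\nu}\binom{\alpha}{\nu}f(x+(\alpha-\nu)h)$ involves the non-integer multiples $(\alpha-\nu)h$, and $\int\chi_{n}(t)f(x+(\alpha-\nu)t)\,dt$ is no longer a trigonometric polynomial. Worse, demanding the exact identity $(I-J)f=\int K(h)\,\Delta_{h}^{\alpha}f(\cdot+ch)\,dh$ with $Jf$ of degree $\le n$ forces the multiplier $\Phi(k)=\int K(h)e^{i(\alpha+c)kh}(1-e^{-ikh})^{\alpha}\,dh$ to equal $1$ at every integer $|k|>n$, whereas $\Phi(0)=(1-1)^{\alpha}\int K=0$ and, by Riemann--Lebesgue applied termwise to the binomial expansion, $\Phi(k)$ tends to a single constant as $k\to\infty$; no integrable kernel supported on $|h|\le c/n$ is known to satisfy such a rigidity, and the standard constructions certainly do not provide one. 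The actual proofs of the fractional direct theorem (see \cite{8}, \cite{17}, \cite{2}, \cite{3}) circumvent this by estimating $\|f-V_{n}f\|$ via a Marcinkiewicz-type multiplier theorem applied to the symbol $(1-v_{n}(k))(1-e^{-ikh})^{-\alpha}$ after averaging over $h\asymp 1/n$ to remove its zeros, or via a realization of the modulus through $n^{-\alpha}\|V_{n}f^{(\alpha)}\|_{p,\tau}$. So what you set aside as ``the main obstacle'' is in fact the entire analytic content of the lemma; as written, the proposal assumes its one-dimensional core rather than proving it.
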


\begin{lemma} (inverse theorem, see \cite{3}) 
\label{lem1 7}. 
If $1<\tau <+\infty$, $\alpha_{j}\in \mathbb{N}$, then 
 \begin{equation*}
 \omega_{\bar\alpha}\bigl(f,\frac{1}{n_{1}+1},...,\frac{1}{n_{m}+1}\bigr)_{p, \tau}
\ll\prod_{j=1}^{m}n_{j}^{-\alpha_{j}}\sum\limits_{\nu_{1}=1}^{n_{1}+1} \ldots \sum\limits_{\nu_{m}=1}^{n_{m}+1} \prod_{j=1}^{m}\nu_{j}^{\alpha_{j} - 1}Y_{\overline{\nu}}(f)_{p, \tau}.
 \end{equation*}
\end{lemma}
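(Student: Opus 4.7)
My plan is to follow the classical dyadic-block scheme for inverse theorems in approximation theory, adapted to the mixed setting. Set $N_j:=n_j+1$, $t_j:=1/N_j$, and choose $K_j\in\mathbb{Z}_+$ with $2^{K_j-1}\le N_j\le 2^{K_j}$; write $2^{\bar K}:=(2^{K_1},\dots,2^{K_m})$ and similarly $2^{\bar s}$. I would first use the subadditivity of the modulus (Lemma~1.1) to split
\[
\omega_{\bar\alpha}(f,\bar t)_{p,\tau}\le\omega_{\bar\alpha}\bigl(U_{2^{\bar K}}(f),\bar t\bigr)_{p,\tau}+\omega_{\bar\alpha}\bigl(f-U_{2^{\bar K}}(f),\bar t\bigr)_{p,\tau},
\]
and bound the second summand by $C\|f-U_{2^{\bar K}}(f)\|_{p,\tau}\ll Y_{2^{\bar K}}(f)_{p,\tau}\ll Y_{\bar N}(f)_{p,\tau}$ via Lemma~1.5 and the trivial estimate $\omega_{\bar\alpha}(g,\bar t)_{p,\tau}\ll\|g\|_{p,\tau}$.

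To estimate $\omega_{\bar\alpha}(U_{2^{\bar K}}(f),\bar t)_{p,\tau}$ I would decompose $U_{2^{\bar K}}(f)$ into Fourier dyadic blocks $\delta_{\bar s}(f)$ and apply Lemmas~1.2 and 1.4 to each block. The key block-norm estimate is that $\delta_{\bar s}$ annihilates every angle polynomial $T=\sum_{j=1}^{m}T_{l_j}$ with $l_j<2^{s_j-1}$ (each summand $T_{l_j}$ has Fourier support $\{|k_j|\le l_j\}$ disjoint from $\rho(\bar s)$ in the $j$-th coordinate), which combined with the $L_{p,\tau}$-boundedness of $\delta_{\bar s}$ gives $\|\delta_{\bar s}(f)\|_{p,\tau}\ll Y_{2^{\bar s-\bar 1}}(f)_{p,\tau}$. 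Lemma~1.2 (first inequality) followed by Bernstein's inequality (Lemma~1.4) then yields
\[
\omega_{\bar\alpha}(\delta_{\bar s}(f),\bar t)_{p,\tau}\ll\prod_{j=1}^{m}t_j^{\alpha_j}\bigl\|\delta_{\bar s}^{(\bar\alpha)}(f)\bigr\|_{p,\tau}\ll\prod_{j=1}^{m}(t_j 2^{s_j})^{\alpha_j}\,Y_{2^{\bar s-\bar 1}}(f)_{p,\tau}.
\]
Summing over the main range $\bar s\le\bar K$ and converting via the elementary dyadic identity $\sum_{\bar\nu\in\rho(\bar s)}\prod_j\nu_j^{\alpha_j-1}\asymp\prod_j 2^{s_j\alpha_j}$ together with the coordinatewise monotonicity of $Y_{\bar\nu}$ produces the right-hand side of the lemma.

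The principal technical obstacle is handling the ``mixed'' blocks $\delta_{\bar s}(f)$ with some $s_j\le K_j$ and some $s_j>K_j$: these lie inside $U_{2^{\bar K}}(f)$ but outside the rectangle $\bar s\le\bar K$, and for them the polynomial bound in the coordinates with $s_j>K_j$ exceeds $1$ and must be capped by the trivial bound, so the corresponding factor $t_j^{\alpha_j}$ is lost. These contributions must be absorbed into the main sum by exploiting the coordinatewise monotonicity of $Y_{\bar\nu}$, the reverse inequality $\sum_{\bar\nu\le\bar N}\prod\nu_j^{\alpha_j-1}Y_{\bar\nu}(f)\gg\prod N_j^{\alpha_j}Y_{\bar N}(f)$, and an Abel-type summation by parts over the dyadic tails (combined with the fact that the whole mixed part is telescopically controlled by $\|f-U_{2^{\bar K}}(f)\|_{p,\tau}\ll Y_{\bar N}(f)_{p,\tau}$). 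A secondary subtlety is the $L_{p,\tau}$-boundedness of the dyadic projectors $\delta_{\bar s}$ used throughout, which is precisely where the hypothesis $1<p,\tau<\infty$ enters.
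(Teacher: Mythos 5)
The paper does not prove Lemma 1.7 at all --- it quotes it from \cite{3} --- but the scheme you propose is, in substance, the one the paper itself uses to prove Theorem 2.2, which is the $\beta$-power sharpening of exactly this inequality: split off $f-U_{2^{\overline{K}}}(f)$ via Lemma 1.5, treat the polynomial part with Lemma 1.2 and Bernstein, convert block norms into best angular approximations, and absorb the mixed terms by monotonicity of $Y_{\overline{\nu}}$ (cf.\ \eqref{eq2.22}). Your handling of the far part and of the main range $\overline{s}\le\overline{K}$ is correct, including the reverse inequality $\sum_{\overline{\nu}\le\overline{N}}\prod_{j}\nu_{j}^{\alpha_{j}-1}Y_{\overline{\nu}}(f)_{p,\tau}\gg\prod_{j}N_{j}^{\alpha_{j}}Y_{\overline{N}}(f)_{p,\tau}$ needed to absorb $Y_{\overline{N}}(f)_{p,\tau}$ into the right-hand side.

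Your treatment of the mixed blocks, however, contains two concrete errors. First, the claim that ``the whole mixed part is telescopically controlled by $\|f-U_{2^{\overline{K}}}(f)\|_{p,\tau}$'' is false: the mixed blocks lie \emph{inside} $U_{2^{\overline{K}}}(f)$, not in its complement, and the norm of the full mixed part attached to a subset $e$ is only controlled by $Y_{\overline{0}^{e},2^{\overline{K}^{\hat{e}}}}(f)_{p,\tau}$, which cannot be dominated by the right-hand side of the lemma (that side contains no $Y$ with a zero index). So no trivial norm bound on the whole mixed part is available; the Bernstein gain $\prod_{j\in e}(t_{j}2^{s_{j}})^{\alpha_{j}}$ in the low-frequency coordinates is indispensable. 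Second, you cannot literally ``decompose $U_{2^{\overline{K}}}(f)$ into dyadic blocks $\delta_{\overline{s}}(f)$'' and sum their norms, because in the coordinates $j\in\hat{e}$ the index $s_{j}$ runs over an infinite range and $\sum_{s_{j}>K_{j}}\|\delta_{\overline{s}}(f)\|_{p,\tau}$ need not converge. The correct grouping is the identity \eqref{eq2.11}, $U_{\overline{n}}(f)=\sum_{e\neq\emptyset}S_{\overline{n}^{e},\infty}(f-S_{\infty,\overline{n}^{\hat{e}}}(f))+S_{\overline{n}}(f)$: keep the $\hat{e}$-coordinates as a single high-pass projection, estimate $\|S_{2^{\overline{s}^{e}},\infty}^{(\overline{\alpha}^{e})}(f-S_{\infty,2^{\overline{K}^{\hat{e}}}}(f))\|_{p,\tau}$ in one shot via Lemma 1.3 together with the bound by $Y_{2^{\overline{s}^{e}}-1,\,2^{\overline{K}^{\hat{e}}}}(f)_{p,\tau}$, and only then run the monotonicity/reverse-sum argument in the $\hat{e}$-coordinates. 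With these two repairs your plan closes and coincides with the argument of \cite{3} and of Theorem 2.2.
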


\begin{rem}\label{rem1.3} In the case $\tau = p$, Lemmas 1.3--1.5 are proved in \cite{14}, \cite{17}, and Lemma 1.1 in \cite{10}.
\end{rem}

  \smallskip

\setcounter{equation}{0}
\setcounter{lemma}{0}
\setcounter{theorem}{0}

\section{Main results}\label{sec2} 

\begin{theorem}\label{th1.1} Let $\alpha_{j}> 0$ for $j=1, \ldots , m$ and $1< p<\infty$, $1< \tau < \infty$. Then a function $f\in {\mathring L}_{p, \tau}(\mathbb{T}^{m})$ satisfies the relation
 \begin{equation*}
 \omega_{\overline\alpha}\bigl(f,\frac{\pi}{n_{1}},...,\frac{\pi}{n_{m}}\bigr)_{p, \tau} \asymp 
\|f - U_{\overline{n}} (f)\|_{p, \tau} + \sum_{e\subset e_{m}, e\neq \emptyset} \prod_{j\in e}n_{j}^{-\alpha_{j}} \|S_{\overline{n}^{e}, \infty}^{(\overline{\alpha}^{e})}(f-S_{\infty, \overline{n}^{\hat{e}}}(f))\|_{p, \tau},
  \end{equation*}
where $\hat{e}$ is the complement of the set $e$. 
\end{theorem}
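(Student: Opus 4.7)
My plan is to establish the two-sided equivalence in Theorem~\ref{th1.1} by proving the upper and lower bounds independently, the former via a Jackson-type decomposition and the latter via a Bernstein-type argument, using the lemmas of Section~\ref{sec1} as the main tools.

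\textbf{Upper bound,} $\omega_{\overline{\alpha}}(f,\pi/n_{1},\ldots,\pi/n_{m})_{p,\tau}\ll\text{RHS}$. I would decompose $f$ according to the disjoint Fourier partition $\mathbb{Z}^{m}=\bigsqcup_{e\subseteq e_{m}}G_{\overline{n}}(e)$: write $f=(f-U_{\overline{n}}(f))+\sum_{e\neq\emptyset}f_{e}$, where $f_{e}$ is the Fourier projection of $f$ onto $G_{\overline{n}}(e)$ (the $e=\emptyset$ term being exactly $f-U_{\overline{n}}(f)$). By the subadditivity property (Lemma~\ref{lem1 1}, item~2) it suffices to bound each summand. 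The summand $f-U_{\overline{n}}(f)$ contributes $\ll\|f-U_{\overline{n}}(f)\|_{p,\tau}$ via the triangle inequality applied to the defining formula of $\Delta^{\overline{\alpha}}_{\overline{h}}$. For each non-empty $e$, $f_{e}$ is a trigonometric polynomial of order $n_{j}$ in $x_{j}$ for $j\in e$; combining the first inequality of Lemma~\ref{lem1 2} (applied to the differences in $e$-variables) with the trivial bound $\|\Delta^{\alpha_{j}}_{h_{j}}u\|_{p,\tau}\ll\|u\|_{p,\tau}$ for $j\in\hat{e}$ yields
\begin{equation*}
\omega_{\overline{\alpha}}(f_{e},\pi/n_{1},\ldots,\pi/n_{m})_{p,\tau}\ll\prod_{j\in e}n_{j}^{-\alpha_{j}}\|f_{e}^{(\overline{\alpha}^{e})}\|_{p,\tau}.
\end{equation*}
Since $f_{e}$ is the further Fourier projection of $S_{\overline{n}^{e},\infty}(f-S_{\infty,\overline{n}^{\hat{e}}}(f))$ onto the sub-rectangle $G_{\overline{n}}(e)$, and such projections are bounded in $L_{p,\tau}$ for $1<p,\tau<\infty$, one obtains $\|f_{e}^{(\overline{\alpha}^{e})}\|_{p,\tau}\ll\|S_{\overline{n}^{e},\infty}^{(\overline{\alpha}^{e})}(f-S_{\infty,\overline{n}^{\hat{e}}}(f))\|_{p,\tau}$, which is the desired RHS term.

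\textbf{Lower bound.} The term $\|f-U_{\overline{n}}(f)\|_{p,\tau}$ is handled by Lemma~\ref{lem1 5} and the direct Jackson estimate of Lemma~\ref{lem1 6}: $\|f-U_{\overline{n}}(f)\|_{p,\tau}\ll Y_{\overline{n}}(f)_{p,\tau}\ll\omega_{\overline{\alpha}}(f,1/(n_{1}+1),\ldots,1/(n_{m}+1))_{p,\tau}$, which is $\asymp\omega_{\overline{\alpha}}(f,\pi/n_{1},\ldots,\pi/n_{m})_{p,\tau}$ by item~5 of Lemma~\ref{lem1 1}. For each non-empty $e$, set $T=S_{\overline{n}^{e},\infty}(f-S_{\infty,\overline{n}^{\hat{e}}}(f))$; this is a trigonometric polynomial of order $n_{j}$ in $x_{j}$ for $j\in e$, so Lemma~\ref{lem1 3} gives
\begin{equation*}
\prod_{j\in e}n_{j}^{-\alpha_{j}}\|T^{(\overline{\alpha}^{e})}\|_{p,\tau}\ll\|\Delta^{\overline{\alpha}^{e}}_{\overline{h}^{e}(\overline{n})}T\|_{p,\tau}.
\end{equation*}
Using that the partial-sum operators commute with $\Delta^{\overline{\alpha}^{e}}_{\overline{h}^{e}}$ and are uniformly bounded on $L_{p,\tau}$, the right-hand side reduces to $\|\Delta^{\overline{\alpha}^{e}}_{\pi/\overline{n}^{e}}(f-S_{\infty,\overline{n}^{\hat{e}}}(f))\|_{p,\tau}$.

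\textbf{Main obstacle.} The hardest step is passing from this partial-difference quantity (in $e$-variables only) to the full mixed modulus $\omega_{\overline{\alpha}}(f,\pi/n_{1},\ldots,\pi/n_{m})_{p,\tau}$, since the two operators act on different sets of variables and the partial difference of $f$ is not directly dominated by the full mixed modulus. I expect to handle this by exploiting that $f-S_{\infty,\overline{n}^{\hat{e}}}(f)$ has Fourier support in $\{\exists\,j\in\hat{e}:|k_{j}|>n_{j}\}$: decompose this function into dyadic blocks $\delta_{\overline{s}^{\hat{e}}}$ along the $\hat{e}$-directions, apply Lemma~\ref{lem1 3} blockwise in the $\hat{e}$-variables to convert each block's partial-difference norm into a full-difference norm, and reassemble using the Littlewood–Paley-type equivalence available in $L_{p,\tau}$ for $1<p,\tau<\infty$. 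Summing the resulting estimates and invoking Lemma~\ref{lem1 6} once more to pass from the best-approximation quantity to the full modulus should complete the argument; the block decomposition and resummation is the technical heart of the proof.
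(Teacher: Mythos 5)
Your overall architecture coincides with the paper's: both directions are proved separately; the direction $\omega_{\overline\alpha}(f,\cdot)_{p,\tau}\ll\mathrm{RHS}$ uses the decomposition of $U_{\overline n}(f)$, the first inequality of Lemma 1.2 in the $e$-variables and boundedness of the fractional difference operators in the remaining variables; and the direction $\mathrm{RHS}\ll\omega_{\overline\alpha}(f,\cdot)_{p,\tau}$ uses Lemmas 1.5 and 1.6 for the term $\|f-U_{\overline n}(f)\|_{p,\tau}$ and Lemma 1.3 together with the commutation of partial sums with differences for the derivative terms. Up to the point you yourself label the ``main obstacle,'' this is essentially the paper's proof (your use of the disjoint projections onto $G_{\overline n}(e)$ instead of the inclusion--exclusion summands is an immaterial variant).

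At that obstacle, however, there is a genuine gap. You correctly reduce the derivative term to $\|\Delta^{\overline\alpha^{e}}_{\overline h^{e}(\overline n)}(f-S_{\infty,\overline n^{\hat e}}(f))\|_{p,\tau}$, but your plan to dominate this by the full mixed modulus --- dyadic decomposition in the $\hat e$-directions, blockwise use of Lemma 1.3, and Littlewood--Paley reassembly --- is only a sketch and does not obviously close: Lemma 1.3 yields, block by block, norms of individual mixed differences, and recombining these into a single norm of a full mixed difference of $f$ would need a reverse square-function step you do not supply; moreover your final appeal to Lemma 1.6 does not specify to which function and which ``angle'' it is applied. The paper's resolution is a short trick that makes all of this unnecessary. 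Set $F=\Delta^{\overline\alpha^{e}}_{\overline h^{e}(\overline n)}f$; by linearity the quantity in question equals $\|F-S_{\infty,\overline n^{\hat e}}(F)\|_{p,\tau}$. Since $f\in\mathring L_{p,\tau}(\mathbb{T}^{m})$ one has $S_{\overline 0^{e},\infty}(F)=S_{\overline 0^{e},\overline n^{\hat e}}(F)=0$, so $F-S_{\infty,\overline n^{\hat e}}(F)=F-U_{\overline 0^{e},\overline n^{\hat e}}(F)$ is an ``angular'' approximation remainder of $F$ itself. Applying Lemma 1.5 and then the direct theorem (Lemma 1.6) to $F$ with orders $0$ in the $e$-variables and $n_{j}$ in the $\hat e$-variables gives
\begin{equation*}
\|F-S_{\infty,\overline n^{\hat e}}(F)\|_{p,\tau}\ll\omega_{\overline\alpha}\bigl(F,\overline 1^{e},\overline h^{\hat e}(\overline n)\bigr)_{p,\tau}\ll\sup_{|h_{j}|\le\frac{\pi}{n_{j}},\,j\in\hat e}\bigl\|\Delta^{\overline\alpha^{\hat e}}_{\overline h^{\hat e}}\Delta^{\overline\alpha^{e}}_{\overline h^{e}(\overline n)}f\bigr\|_{p,\tau}\le\omega_{\overline\alpha}\bigl(f,\tfrac{\pi}{n_{1}},\ldots,\tfrac{\pi}{n_{m}}\bigr)_{p,\tau},
\end{equation*}
the last step because the composed difference is a full mixed difference of $f$ with admissible steps. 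This identification of the partial remainder as an angle remainder of the already-differenced function $F$ is the idea missing from your proposal.
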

\proof
From Lemma 1.5 and Lemma 1.6 it follows that  
 \begin{equation}\label{eq2.1}
 \|f - U_{\overline{n}} (f)\|_{p, \tau}\ll  \omega_{\overline\alpha}\bigl(f,\frac{\pi}{n_{1}},...,\frac{\pi}{n_{m}}\bigr)_{p, \tau}
\end{equation}
for a function 
 $f\in {\mathring L}_{p, \tau}(\mathbb{T}^{m})$.
We evaluate the norm 
  $\|S_{\overline{n}^{e}, \infty}^{\overline{\alpha}^{e}}(f-S_{\infty, \overline{n}^{\hat{e}}}(f))\|_{p, \tau}$. Let us introduce the notation 
   $G(\overline{x})=f(\overline{x})-S_{\infty, \overline{n}^{\hat{e}}}(f, \overline{x}))$. Then applying Lemma 1.3 we obtain 
 \begin{equation}\label{eq2.2}
 \|S_{\overline{n}^{e}, \infty}^{(\overline{\alpha}^{e})}(f-S_{\infty, \overline{n}^{\hat{e}}}(f))\|_{p, \tau}= \|S_{\overline{n}^{e}, \infty}^{(\overline{\alpha}^{e})}(G)\|_{p, \tau}\ll \prod_{j\in e}n_{j}^{\alpha_{j}}\|\Delta_{\overline{h}^{e}(\overline{n})}^{\overline{\alpha}^{e}} S_{\overline{n}^{e}, \infty}(G)\|_{p, \tau}.  
\end{equation}
Taking into account the linearity of the difference of positive order and the boundedness of the Fourier sum operator in the Lorentz space (see \cite{4}) from \eqref{eq2.2} we get
 \begin{equation}\label{eq2.3}
\|S_{\overline{n}^{e}, \infty}^{(\overline{\alpha}^{e})}(f-S_{\infty, \overline{n}^{\hat{e}}}(f))\|_{p, \tau} \ll \prod_{j\in e}n_{j}^{\alpha_{j}}\|S_{\overline{n}^{e}, \infty}(\Delta_{\overline{h}^{e}(\overline{n})}^{\overline{\alpha}^{e}}G) \|_{p, \tau}\ll \prod_{j\in e}n_{j}^{\alpha_{j}}\|\Delta_{\overline{h}^{e}(\overline{n})}^{\overline{\alpha}^{e}}G) \|_{p, \tau}
\end{equation}
for a function 
 $f\in {\mathring L}_{p, \tau}(\mathbb{T}^{m})$, $1< p<\infty$, $1< \tau < \infty$.
Let us introduce the notation 
 $\Delta_{\overline{h}^{e}(\overline{n})}^{\overline{\alpha}^{e}}f(\overline{x})=F(\overline{x})$.  Then again, taking into account the linearity of the difference operator of positive order, we have
 \begin{equation}\label{eq2.4}
\|\Delta_{\overline{h}^{e}(\overline{n})}^{\overline{\alpha}^{e}}G) \|_{p, \tau}=\|\Delta_{\overline{h}^{e}(\overline{n})}^{\overline{\alpha}^{e}}f -\Delta_{\overline{h}^{e}(\overline{n})}^{\overline{\alpha}^{e}}S_{\infty, \overline{n}^{\hat{e}}}(f) \|_{p, \tau}=\|F-S_{\infty, \overline{n}^{\hat{e}}}(F) \|_{p, \tau}.
\end{equation}
Inequalities \eqref{eq2.3} and \eqref{eq2.4} imply that  
\begin{equation}\label{eq2.5}
\|S_{\overline{n}^{e}, \infty}^{(\overline{\alpha}^{e})}(f-S_{\infty, \overline{n}^{\hat{e}}}(f))\|_{p, \tau} \ll \prod_{j\in e}n_{j}^{\alpha_{j}}\|F-S_{\infty, \overline{n}^{\hat{e}}}(F) \|_{p, \tau}. 
\end{equation}
Since $S_{\overline{0}^{e}, \infty}(F, \overline{x})=S_{\overline{0}^{e}, \overline{n}^{\hat{e}}}(F, \overline{x})=0$, then
\begin{equation}\label{eq2.6}
\|F-S_{\infty, \overline{n}^{\hat{e}}}(F) \|_{p, \tau}=\|F-S_{\overline{0}^{e}, \infty}(F)-S_{\infty, \overline{n}^{\hat{e}}}(F) +S_{\overline{0}^{e}, \overline{n}^{\hat{e}}}(F)\|_{p, \tau}
\end{equation}
Now, using Lemma 1.5 and Lemma 1.6, as well as the property of the mixed modulus of smoothness from \eqref{eq2.6}, we obtain
\begin{equation*} 
\|F-S_{\infty, \overline{n}^{\hat{e}}}(F) \|_{p, \tau}  \ll  \omega_{\overline\alpha}\bigl(F, \overline{1}^{e},  \overline{h}^{e}(\overline{n}))_{p, \tau}  \ll \sup_{|h_{j}|\leqslant \frac{\pi}{n_{j}}, j\in \hat{e}}\|\Delta_{\overline{h}^{\hat{e}}(\overline{n})}^{\overline{\alpha}^{\hat{e}}}F\|_{p, \tau}
\end{equation*}
\begin{equation}\label{eq2.7}
 =\sup_{|h_{j}|\leqslant \frac{\pi}{n_{j}}, j\in \hat{e}}\|\Delta_{\overline{h}^{\hat{e}}(\overline{n})}^{\overline{\alpha}^{\hat{e}}}(\Delta_{\overline{h}^{e}(\overline{n})}^{\overline{\alpha}^{e}}f)\|_{p, \tau} \ll  \omega_{\overline\alpha}\bigl(f,\frac{\pi}{n_{1}},...,\frac{\pi}{n_{m}}\bigr)_{p, \tau}.
\end{equation}
Now from inequalities \eqref{eq2.5} and \eqref{eq2.7} it follows that
\begin{equation}\label{eq2.8}
\|S_{\overline{n}^{e}, \infty}^{(\overline{\alpha}^{e})}(f-S_{\infty, \overline{n}^{\hat{e}}}(f))\|_{p, \tau} \ll \prod_{j\in e}n_{j}^{\alpha_{j}}\omega_{\overline\alpha}\bigl(f,\frac{\pi}{n_{1}},...,\frac{\pi}{n_{m}}\bigr)_{p, \tau}
\end{equation}
for a proper subset $e \subset e_{m}$. 

Let $e=e_{m}$. Then by Lemma 1.2 and due to the boundedness of the Fourier sum operator in the Lorentz space \cite{4}, we have
\begin{equation*} 
\|S_{\overline{n}}^{(\overline{\alpha})}(f)\|_{p, \tau}\ll \prod_{j=1}^{m}n_{j}^{\alpha_{j}}\|\Delta_{\frac{\pi}{n_{1}},...,\frac{\pi}{n_{m}}}^{\overline{\alpha}}S_{\overline{n}}(f)\|_{p, \tau}
=C\prod_{j=1}^{m}n_{j}^{\alpha_{j}}\|S_{\overline{n}}(\Delta_{\frac{\pi}{n_{1}},...,\frac{\pi}{n_{m}}}^{\overline{\alpha}}f)\|_{p, \tau}
\end{equation*}
\begin{equation}\label{eq2.9}
\ll \prod_{j=1}^{m}n_{j}^{\alpha_{j}}\omega_{\overline\alpha}\bigl(f,\frac{\pi}{n_{1}},...,\frac{\pi}{n_{m}}\bigr)_{p, \tau}
\end{equation}
for a function 
 $f\in {\mathring L}_{p, \tau}(\mathbb{T}^{m})$, $1< p<\infty$, $1< \tau < \infty$.
Now from inequalities \eqref{eq2.1}, \eqref{eq2.8} and \eqref{eq2.9} it follows that
 \begin{equation}\label{eq2.10}
 \begin{gathered}
 \|f - U_{\overline{n}} (f)\|_{p, \tau} + \sum_{e\subset e_{m}, e\neq \emptyset} \prod_{j\in e}n_{j}^{-\alpha_{j}} \|S_{\overline{n}^{e}, \infty}^{(\overline{\alpha}^{e})}(f-S_{\infty, \overline{n}^{\hat{e}}}(f))\|_{p, \tau} + \prod_{j=1}^{m}n_{j}^{-\alpha_{j}}\|S_{\overline{n}}^{(\overline{\alpha})}(f)\|_{p, \tau}
\\
\ll \omega_{\overline\alpha}\bigl(f,\frac{\pi}{n_{1}},...,\frac{\pi}{n_{m}}\bigr)_{p, \tau} 
\end{gathered}
\end{equation}
for a function 
 $f\in {\mathring L}_{p, \tau}(\mathbb{T}^{m})$, $1< p<\infty$, $1< \tau < \infty$.

Let us prove the opposite inequality to \eqref{eq2.10}. The function $ U_{\overline{n}} (f, \overline{x})$ can be written in the following form 
\begin{equation}\label{eq2.11}
 U_{\overline{n}} (f, 2\pi\overline{x})=\sum_{e\subset e_{m}, e\neq \emptyset}S_{\overline{n}^{e}, \infty}(f-S_{\infty, \overline{n}^{\hat{e}}}(f), 2\pi\overline{x}) + S_{\overline{n}}(f, 2\pi\overline{x}).  
\end{equation}
Now, using this equality and the property of the mixed modulus of smoothness, we get
  \begin{equation}\label{eq2.12}
 \begin{gathered}
\omega_{\overline\alpha}\bigl(f,\frac{\pi}{n_{1}},...,\frac{\pi}{n_{m}}\bigr)_{p, \tau} \leqslant \omega_{\overline\alpha}\bigl(f-U_{\overline{n}} (f), \frac{\pi}{n_{1}},...,\frac{\pi}{n_{m}}\bigr)_{p, \tau} + \omega_{\overline\alpha}\bigl(U_{\overline{n}} (f), \frac{\pi}{n_{1}},...,\frac{\pi}{n_{m}}\bigr)_{p, \tau} 
\\
\ll \|f - U_{\overline{n}} (f)\|_{p, \tau} + \sum_{e\subset e_{m}, e\neq \emptyset}\omega_{\overline\alpha}\bigl(S_{\overline{n}^{e}, \infty}(f-S_{\infty, \overline{n}^{\hat{e}}}(f), \frac{\pi}{n_{1}},...,\frac{\pi}{n_{m}}\bigr)_{p, \tau} + \omega_{\overline\alpha}\bigl(S_{\overline{n}}(f), \frac{\pi}{n_{1}},...,\frac{\pi}{n_{m}}\bigr)_{p, \tau}
\\
\ll \|f - U_{\overline{n}} (f)\|_{p, \tau} + \sum_{e\subset e_{m}, e\neq \emptyset} \prod_{j\in e}n_{j}^{-\alpha_{j}} \|S_{\overline{n}^{e}, \infty}^{(\overline{\alpha}^{e})}(f-S_{\infty, \overline{n}^{\hat{e}}}(f))\|_{p, \tau} + \prod_{j=1}^{m}n_{j}^{-\alpha_{j}}\|S_{\overline{n}}^{(\overline{\alpha})}(f)\|_{p, \tau}  
\end{gathered}
\end{equation}
for a function 
 $f\in {\mathring L}_{p, \tau}(\mathbb{T}^{m})$, $1< p<\infty$, $1< \tau < \infty$.
\hfill $\Box$

\begin{theorem}\label{th1.2}
Let $\alpha_{j}> 0$ for $j=1, \ldots , m$, $1< p<\infty$ and $1< \tau \leqslant 2$ or $2<p<\infty$ and $2<\tau<\infty$, $\beta=\min\{2, \tau\}$. Then for a function $f\in {\mathring L}_{p, \tau}(\mathbb{T}^{m})$ the inequality holds
  \begin{equation*}
 \omega_{\overline\alpha}\bigl(f,\frac{\pi}{n_{1}},...,\frac{\pi}{n_{m}}\bigr)_{p, \tau} \ll
 \prod_{j=1}^{m}n_{j}^{-\alpha_{j}}\Biggl(\sum\limits_{\nu_{1}=1}^{n_{1}+1} \ldots \sum\limits_{\nu_{m}=1}^{n_{m}+1} \prod_{j=1}^{m}\nu_{j}^{\beta\alpha_{j} - 1}Y_{\overline{\nu}}^{\beta}(f)_{p, \tau}\Biggr)^{1/\beta}, \,\, n_{j}\in \mathbb{N}.
 \end{equation*}
 \end{theorem}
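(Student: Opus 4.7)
The plan is to combine Theorem~\ref{th1.1} with a Littlewood--Paley decomposition in $L_{p,\tau}$ and a Hardy-type dyadic-to-arithmetic summation. Raising Theorem~\ref{th1.1} to the $\beta$-th power (with $\beta := \min\{2,\tau\} \geq 1$, so that $(a+b)^{\beta} \ll a^{\beta}+b^{\beta}$) yields
\begin{equation*}
\omega_{\overline{\alpha}}\Bigl(f,\frac{\pi}{n_{1}},\ldots,\frac{\pi}{n_{m}}\Bigr)_{p,\tau}^{\beta} \ll \|f - U_{\overline{n}}(f)\|_{p,\tau}^{\beta} + \sum_{\emptyset \neq e \subset e_{m}} \prod_{j \in e} n_{j}^{-\beta\alpha_{j}}\, \|S^{(\overline{\alpha}^{e})}_{\overline{n}^{e},\infty}(V_{e})\|_{p,\tau}^{\beta},
\end{equation*}
with $V_{e} := f - S_{\infty,\overline{n}^{\hat{e}}}(f)$, so it suffices to bound each summand by the desired right-hand side. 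For the first summand, Lemma~\ref{lem1 5} gives $\|f - U_{\overline{n}}(f)\|_{p,\tau} \ll Y_{\overline{n}}(f)_{p,\tau}$; the monotonicity $Y_{\overline{\nu}}(f) \geq Y_{\overline{n}}(f)$ for $\overline{\nu} \leq \overline{n}$, together with $\sum_{\nu_{j}=1}^{n_{j}+1} \nu_{j}^{\beta\alpha_{j}-1} \asymp n_{j}^{\beta\alpha_{j}}$, then yields the required estimate.

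For a non-empty $e \subset e_{m}$, expand $S^{(\overline{\alpha}^{e})}_{\overline{n}^{e},\infty}(V_{e}) = \sum_{\overline{s} \in A(e)} \delta^{(\overline{\alpha}^{e})}_{\overline{s}}(f)$, where
\begin{equation*}
A(e) := \{\overline{s} \in \mathbb{N}^{m} :\ s_{j} \leq \log_{2} n_{j}+1\ \text{for}\ j \in e,\ \text{and}\ s_{j} > \log_{2} n_{j}\ \text{for some}\ j \in \hat{e}\}
\end{equation*}
(only $\overline{s} \in \mathbb{N}^{m}$ appears since $f \in \mathring{L}_{p,\tau}$ annihilates blocks with some $s_{j}=0$). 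Bernstein's inequality (Lemma~\ref{lem1 4}) on each block, followed by the Littlewood--Paley inequality in $L_{p,\tau}$ with exponent $\beta = \min\{2,\tau\}$---available precisely in the two regimes of the hypothesis, via the pointwise embedding $\ell^{\tau} \hookrightarrow \ell^{2}$ when $\tau \leq 2$ and via Minkowski's inequality on $L^{p/2}$ when $p,\tau > 2$---yields
\begin{equation*}
\|S^{(\overline{\alpha}^{e})}_{\overline{n}^{e},\infty}(V_{e})\|_{p,\tau}^{\beta} \ll \sum_{\overline{s} \in A(e)} \prod_{j \in e} 2^{s_{j}\beta\alpha_{j}}\, \|\delta_{\overline{s}}(f)\|_{p,\tau}^{\beta}.
\end{equation*}

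Next, each $\|\delta_{\overline{s}}(f)\|_{p,\tau}$ is replaced by $Y_{(2^{s_{1}-1},\ldots,2^{s_{m}-1})}(f)_{p,\tau}$: indeed, any angular polynomial $\sum_{j} T_{l_{j}}$ of degrees $l_{j} = 2^{s_{j}-1}-1$ has Fourier support disjoint from $\rho(\overline{s})$, so $\delta_{\overline{s}}(f) = \delta_{\overline{s}}(f - \sum_{j} T_{l_{j}})$, and the dyadic Fourier projector is bounded on $L_{p,\tau}$. A Hardy-type dyadic-to-arithmetic summation converts $\prod_{j \in e} 2^{s_{j}\beta\alpha_{j}}$ into $\prod_{j \in e} \sum_{\nu_{j} \in [2^{s_{j}-1},2^{s_{j}})} \nu_{j}^{\beta\alpha_{j}-1}$, producing the inner summation in the $j \in e$ coordinates; for $j \in \hat{e}$, the tail range $s_{j} > \log_{2} n_{j}$ is absorbed by the monotonicity $Y_{(\ldots,2^{s_{j}-1},\ldots)}(f) \leq Y_{(\ldots,\nu_{j},\ldots)}(f)$ for $\nu_{j} \leq n_{j}$ followed by averaging over $\nu_{j} \in [1,n_{j}]$, which simultaneously reintroduces the weight $\nu_{j}^{\beta\alpha_{j}-1}$ and supplies the missing factor $n_{j}^{-\beta\alpha_{j}}$ through $\sum_{\nu_{j} \leq n_{j}} \nu_{j}^{\beta\alpha_{j}-1} \asymp n_{j}^{\beta\alpha_{j}}$.

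I expect the main technical obstacle to be the validity of the Littlewood--Paley inequality with the sharp exponent $\beta = \min\{2,\tau\}$ in the Lorentz space $L_{p,\tau}$, together with the careful dyadic bookkeeping in the $\hat{e}$-directions, so that the weights combine correctly with the outer factor $\prod_{j \in e} n_{j}^{-\beta\alpha_{j}}$ to yield the full $\prod_{j=1}^{m} n_{j}^{-\beta\alpha_{j}}$ on the right-hand side.
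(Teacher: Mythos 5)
Your overall skeleton coincides with the paper's: start from the decomposition behind Theorem~\ref{th1.1}, raise to the power $\beta$, handle $\|f-U_{\overline{n}}(f)\|_{p,\tau}$ via Lemma~\ref{lem1 5} plus monotonicity of $Y$, and for $e=e_{m}$ pass through the square function of the blocks $\delta_{\overline{\mu}}(f)$ with $0\le\mu_{j}\le k_{j}$ and replace $\|\delta_{\overline{\mu}}(f)\|_{p,\tau}$ by $Y_{[2^{\mu_{1}-1}],\ldots,[2^{\mu_{m}-1}]}(f)_{p,\tau}$. Those parts are sound. The genuine gap is in the mixed terms with $\emptyset\ne e\subsetneq e_{m}$: your index set $A(e)$ is infinite in the $\hat{e}$-directions, and your Littlewood--Paley step leaves you with the \emph{unweighted} sum $\sum_{s_{j}>\log_{2}n_{j},\,j\in\hat{e}}\|\delta_{\overline{s}}(f)\|_{p,\tau}^{\beta}$ in those coordinates. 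This quantity is not controlled by $\|f\|_{p,\tau}^{\beta}$, let alone by the right-hand side of the theorem: the sharp exponent $\theta$ for which $\sum_{\overline{s}}\|\delta_{\overline{s}}(f)\|_{p,\tau}^{\theta}\ll\|f\|_{p,\tau}^{\theta}$ holds is strictly larger than $\beta=\min\{2,\tau\}$ in both regimes of the hypothesis (for instance, for $2<p<\infty$ a lacunary $f$ with $\sum_{\overline{s}}\lambda_{\overline{s}}^{2}<\infty$ but $\sum_{\overline{s}}|\lambda_{\overline{s}}|^{\tau}=\infty$, $\tau<2$, makes your intermediate bound infinite while $\omega_{\overline{\alpha}}(f,\cdot)_{p,\tau}$ and the target right-hand side are finite). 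Your proposed rescue --- monotonicity $Y_{(\ldots,2^{s_{j}-1},\ldots)}\le Y_{(\ldots,\nu_{j},\ldots)}$ followed by averaging over $\nu_{j}\le n_{j}$ --- bounds each individual term but is applied to infinitely many of them, so it cannot absorb the tail.

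The paper's proof avoids this precisely at that point: the whole $\hat{e}$-tail is kept inside a single square function,
\begin{equation*}
\Bigl\|\Bigl(\sum_{\nu_{j}>\mu_{j},\,j\in e}\ \sum_{\mu_{j}>k_{j},\,j\in\hat{e}}|\delta_{\overline{\nu}^{e},\overline{\mu}^{\hat{e}}}(f)|^{2}\Bigr)^{1/2}\Bigr\|_{p,\tau}\ll\|f-U_{2^{\overline{\mu}^{e}},2^{\overline{k}^{\hat{e}}}}(f)\|_{p,\tau}\ll Y_{2^{\overline{\mu}^{e}},2^{\overline{k}^{\hat{e}}}}(f)_{p,\tau},
\end{equation*}
so the infinite tail collapses into \emph{one} best-approximation number, and only then is the monotone-averaging trick (inequality \eqref{eq2.22} of the paper) applied to the finitely many dyadic indices $0\le\mu_{j}\le k_{j}$, $j\in\hat{e}$, to restore the weights $\prod_{j\in\hat{e}}2^{\mu_{j}\beta\alpha_{j}}$ and the factor $\prod_{j\in\hat{e}}2^{-k_{j}\beta\alpha_{j}}$. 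You need to restructure your treatment of the $\hat{e}$-coordinates along these lines. Two smaller points: Lemma~\ref{lem1 4} is stated only for $\alpha_{j}\in\mathbb{Z}_{+}$, so the blockwise Bernstein step for fractional $\overline{\alpha}$ must be justified via Lemma~\ref{lem1 2}/\ref{lem1 3} (or the Littlewood--Paley estimate for $S_{\overline{n}}^{(\overline{\alpha})}$ from \cite{2}); and your definition of $A(e)$ is vacuous for $e=e_{m}$, where the correct set is all $\overline{s}$ with $s_{j}\le\log_{2}n_{j}+1$ for every $j$.
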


\proof
This theorem is stated in \cite[Theorem 4.2]{2} and proved for $m=2$. We prove the theorem for $m\geqslant 3$.
For $n_{j}\in \mathbb{N}$, choose a non-negative integer $k_{j}$ such that
$2^{k_{j}}\leq n_{j}< 2^{k_{j} + 1}$, $j=1, \ldots , m$. Then, according to the property of the mixed modulus of smoothness of a function, we have
\begin{equation*}
   I_{1}(f):=\omega_{\overline\alpha}\Bigl(f,\frac{\pi}{n_{1}},\ldots, \frac{\pi}{n_{m}}\Bigr)_{p, \tau}  \ll \omega_{\overline\alpha}\Bigl(f,\frac{1}{2^{k_{1}}},\ldots , \frac{1}{2^{k_{m}}}\Bigr)_{p, \tau}.
 \end{equation*}
Further, according to the properties of the mixed modulus of smoothness of a function, we obtain
\begin{equation}\label{eq2.13}
I_{1}^{\tau}(f)\ll  \omega_{\overline\alpha}^{\tau}\bigl(f-U_{2^{k_{1}},...,2^{k_{m}}} (f), \frac{1}{2^{k_{1}}},\ldots , \frac{1}{2^{k_{m}}}\bigr)_{p, \tau} + \omega_{\overline\alpha}^{\tau}\bigl(U_{2^{k_{1}},...,2^{k_{m}}} (f), \frac{1}{2^{k_{1}}},\ldots , \frac{1}{2^{k_{m}}}\bigr)_{p, \tau}
\end{equation}
Using equality \eqref{eq2.11} and the property of the mixed modulus of smoothness, we have (see \eqref{eq2.12})
\begin{multline}\label{eq2.14}
\omega_{\overline\alpha}\bigl(U_{2^{k_{1}},...,2^{k_{m}}} (f), \frac{1}{2^{k_{1}}},\ldots , \frac{1}{2^{k_{m}}}\bigr)_{p, \tau} \ll \sum_{\substack{e\subset e_{m}, \\
e\neq \emptyset}} \omega_{\overline\alpha}\bigl(S_{2^{\overline{k}^{e}, \infty}}(f-S_{\infty, 2^{\overline{k}^{\hat{e}}}}(f), \frac{1}{2^{k_{1}}},\ldots , \frac{1}{2^{k_{m}}}\bigr)_{p, \tau}
\\
 + \omega_{\overline\alpha}\bigl(S_{2^{k_{1}},...,2^{k_{m}}}(f), \frac{1}{2^{k_{1}}},\ldots , \frac{1}{2^{k_{m}}}\bigr)_{p, \tau}.
\end{multline}
From inequalities \eqref{eq2.13} and \eqref{eq2.14} it follows that 
\begin{multline}\label{eq2.15}
I_{1}^{\tau}(f)\ll  \omega_{\overline\alpha}^{\tau}\bigl(f-U_{\overline{n}} (f), \frac{1}{2^{k_{1}}},\ldots , \frac{1}{2^{k_{m}}}\bigr)_{p, \tau} + \omega_{\overline\alpha}^{\tau}\bigl(S_{2^{k_{1}},...,2^{k_{m}}}(f), \frac{1}{2^{k_{1}}},\ldots , \frac{1}{2^{k_{m}}}\bigr)_{p, \tau} 
\\
+ \sum_{\substack{e\subset e_{m}, \\ e\neq \emptyset}}\omega_{\overline\alpha}^{\tau}\bigl(S_{2^{\overline{k}^{e}, \infty}}(f-S_{\infty, 2^{\overline{k}^{\hat{e}}}}(f), \frac{1}{2^{k_{1}}},\ldots , \frac{1}{2^{k_{m}}}\bigr)_{p, \tau}.
\end{multline}
By the property of the mixed modulus of smoothness and Lemma 1.5 we have
 \begin{equation}\label{eq2.16}
\omega_{\overline\alpha}^{\tau}\bigl(f-U_{2^{k_{1}},...,2^{k_{m}}} (f), \frac{1}{2^{k_{1}}},\ldots , \frac{1}{2^{k_{m}}}\bigr)_{p, \tau}\ll \|f-U_{2^{k_{1}},...,2^{k_{m}}} (f)\|_{p, \tau}\ll Y_{2^{k_{1}},...,2^{k_{m}}}(f)_{p, \tau}
\end{equation}
for a function 
  $f\in {\mathring L}_{p, \tau}(\mathbb{T}^{m})$, $1< p<\infty$, $1< \tau < \infty$.
Further, using Lemma 1.2 and Theorem 2.1 \cite{2} we obtain 
 \begin{multline}\label{eq2.17}
\omega_{\overline\alpha}\bigl(S_{2^{k_{1}},...,2^{k_{m}}}(f), \frac{1}{2^{k_{1}}},\ldots , \frac{1}{2^{k_{m}}}\bigr)_{p, \tau}\ll \prod_{j=1}^{m}2^{-k_{j}\alpha_{j}}\|S_{2^{k_{1}},...,2^{k_{m}}}^{(\overline{\alpha})}(f)\|_{p, \tau}
\\
\ll\prod_{j=1}^{m}2^{-k_{j}\alpha_{j}} \Biggl\|\Biggl(\sum\limits_{\mu_{1}=0}^{k_{1}}...\sum\limits_{\mu_{m}=0}^{k_{m}}\prod_{j=1}^{m}2^{\mu_{j}\alpha_{j}2}|\delta_{\overline{\mu}}(f)|^{2}\Biggr)^{1/2} \Biggr\|_{p, \tau}
\end{multline}
for a function $f\in {\mathring L}_{p, \tau}(\mathbb{T}^{m})$, $1< p<\infty$, $1< \tau < \infty$.

Let $\beta = \min\{2, \tau\}=2$ and $2< p< \infty$. Then according to Lemma 1. 2 \cite{2} from \eqref{eq2.17} we obtain
\begin{multline}\label{eq2.18}
\omega_{\overline\alpha}\bigl(S_{2^{k_{1}},...,2^{k_{m}}}(f), \frac{1}{2^{k_{1}}},\ldots , \frac{1}{2^{k_{m}}}\bigr)_{p, \tau}  \ll \prod_{j=1}^{m}2^{-k_{j}\alpha_{j}}\Biggl\{\sum\limits_{\mu_{1}=0}^{k_{1}}...\sum\limits_{\mu_{m}=0}^{k_{m}} \prod_{j=1}^{m}2^{\mu_{j}\alpha_{j}2}\|\delta_{\overline{\mu}}(f)\|_{p, \tau}^{2} \Biggr\}^{1/2}
\\
 \ll \prod_{j=1}^{m}2^{-k_{j}\alpha_{j}}\Biggl\{\sum\limits_{\mu_{1}=0}^{k_{1}}...\sum\limits_{\mu_{m}=0}^{k_{m}}\prod_{j=1}^{m}2^{\mu_{j}\alpha_{j}2} Y_{[2^{\mu_{1}-1}],... [2^{\mu_{m}-1}]}^{2}(f)_{p, \tau} \Biggr\}^{1/2}
\end{multline}
for a function  
$f\in L_{p, \tau}(\mathbb{T}^{m})$, 
in the case $\beta = \min\{2, \tau\}=2$ and $2< p< \infty$.

If $\beta = \min\{2, \tau\}=\tau$ and $1< p< \infty$, then by Lemma 1.1 and Lemma 1.4 \cite{4} from \eqref{eq2.17} we obtain
\begin{multline}\label{eq2.19}
\omega_{\overline\alpha}\bigl(S_{2^{k_{1}},...,2^{k_{m}}}(f), \frac{1}{2^{k_{1}}},\ldots , \frac{1}{2^{k_{m}}}\bigr)_{p, \tau}   \ll \prod_{j=1}^{m}2^{-k_{j}\alpha_{j}}\Biggl\{\sum\limits_{\mu_{1}=0}^{k_{1}}...\sum\limits_{\mu_{m}=0}^{k_{m}}\prod_{j=1}^{m}2^{\mu_{j}\alpha_{j}\tau}\|\delta_{\overline{\mu}}(f)\|_{p, \tau}^{\tau}\Biggr\}^{1/\tau}
\\
\ll \prod_{j=1}^{m}2^{-k_{j}\alpha_{j}}\Biggl\{\sum\limits_{\mu_{1}=0}^{k_{1}}...\sum\limits_{\mu_{m}=0}^{k_{m}}\prod_{j=1}^{m}2^{\mu_{j}\alpha_{j}\tau} Y_{[2^{\mu_{1}-1}],... [2^{\mu_{m}-1}]}^{\tau}(f)_{p, \tau}\Biggr\}^{1/\tau}
\end{multline}
for a function 
 $f\in L_{p, \tau}(\mathbb{T}^{m})$, in the case 
  $\beta = \min\{2, \tau\}=\tau$ and $1< p< \infty$.
By the property of the mixed modulus of smoothness (see Lemma 1.2) and by Theorem 2.1 \cite{2} we obtain
\begin{multline}\label{eq2.20}
I_{\overline{k}}:=\omega_{\overline\alpha}\bigl(S_{2^{\overline{k}^{e}, \infty}}(f-S_{\infty, 2^{\overline{k}^{\hat{e}}}}(f), \frac{1}{2^{k_{1}}},\ldots , \frac{1}{2^{k_{m}}}\bigr)_{p, \tau}\ll 
\prod_{j\in e}2^{-k_{j}\alpha_{j}}\|S_{2^{\overline{k}^{e}, \infty}}^{(\overline{\alpha}^{e})}(f-S_{\infty, 2^{\overline{k}^{\hat{e}}}}(f)\|_{p, \tau}
\\
\ll \prod_{j\in e}2^{-k_{j}\alpha_{j}}\biggl\|\sum\limits_{\overline{\mu}\in G_{\overline{k}}(e)}\delta_{\overline{\mu}}^{(\overline{\alpha}^{e})}(f) \biggr\|_{p, \tau} \ll \prod_{j\in e}2^{-k_{j}\alpha_{j}} \biggl\|\biggl(\sum\limits_{\overline{\mu}\in G_{\overline{k}}(e)}\prod_{j\in e}2^{\mu_{j}\alpha_{j}2} |\delta_{\overline{\mu}}(f)|^{2}\biggr)^{1/2} \biggr\|_{p, \tau}
\end{multline}
If $\beta = \min\{2, \tau\}=2$ and $2< p< \infty$, then by Lemma 1. 2 \cite{4} (see also \cite{5}) from \eqref{eq2.20} it follows that
\begin{multline}\label{eq2.21}
I_{\overline{k}}\ll \prod_{j\in e}2^{-k_{j}\alpha_{j}}\biggl(\sum\limits_{\substack{0\leqslant \mu_{j}\leqslant k_{j}, \\ j\in e}}\prod_{j\in e}2^{\mu_{j}\alpha_{j}2} \biggl\|\biggl(\sum\limits_{\substack{\mu_{j}>k_{j}, \\ j\in \hat{e}}} |\delta_{\overline{\mu}}(f)|^{2}\biggr)^{1/2}\biggr\|_{p, \tau}^{2}\biggr)^{1/2}
\\
\ll \prod_{j\in e}2^{-k_{j}\alpha_{j}}\biggl(\sum\limits_{\substack{0\leqslant \mu_{j}\leqslant k_{j},\\ j\in e}}\prod_{j\in e}2^{\mu_{j}\alpha_{j}2} \biggl\|\biggl(\sum\limits_{\substack{\nu_{j}>\mu_{j}, \\j\in e}}\sum\limits_{\substack{\mu_{j}>k_{j}, \\j\in \hat{e}}} |\delta_{\overline{\nu}^{e}, \overline{\mu}^{\hat{e}}}(f)|^{2}\biggr)^{1/2}\biggr\|_{p, \tau}\biggr)^{1/2}
\\
\prod_{j\in e}2^{-k_{j}\alpha_{j}}\biggl(\sum\limits_{\substack{0\leqslant \mu_{j}\leqslant k_{j},\\ j\in e}}\prod_{j\in e}2^{\mu_{j}\alpha_{j}2} \|f-U_{2^{\overline{\mu}^{e}}, 2^{\overline{k}^{\hat{e}}}}\|_{p, \tau}^{2} \biggr)^{1/2} 
\\
\ll \prod_{j\in e}2^{-k_{j}\alpha_{j}}\biggl(\sum\limits_{\substack{0\leqslant \mu_{j}\leqslant k_{j}, \\ j\in e}}\prod_{j\in e}2^{\mu_{j}\alpha_{j}2} Y_{2^{\overline{\mu}^{e}}, 2^{\overline{k}^{\hat{e}}}}(f)_{p, \tau}^{2} \biggr)^{1/2}.
\end{multline}
Since $\alpha_{j}>0$ and the best approximation by <<angle>> decreases with each index, then
\begin{equation}\label{eq2.22}
Y_{2^{\overline{\mu}^{e}}, 2^{\overline{k}^{\hat{e}}}}(f)_{p, \tau} \ll \prod_{j\in \hat{e}}2^{-k_{j}\alpha_{j}}\biggl(\sum\limits_{\substack{0\leqslant \mu_{j}\leqslant k_{j}, \\ j\in \hat{e}}}\prod_{j\in \hat{e}}2^{\mu_{j}\alpha_{j}2} Y_{2^{\overline{\mu}^{e}}, 2^{\overline{\mu}^{\hat{e}}}}(f)_{p, \tau}^{2} \biggr)^{1/2}.
\end{equation}
Therefore, from inequality \eqref{eq2.21} it follows that
\begin{equation}\label{eq2.23}
I_{\overline{k}}\ll \prod_{j=1}^{m}2^{-k_{j}\alpha_{j}}\biggl(\sum\limits_{\mu_{1}=0}^{k_{1}}...\sum\limits_{\mu_{m}=0}^{k_{m}}\prod_{j=1}^{m}2^{\mu_{j}\alpha_{j}2} Y_{2^{\mu_{1}},... 2^{\mu_{m}}}(f)_{p, \tau}^{2} \biggr)^{1/2}
\end{equation}
for a function  $f\in L_{p, \tau}(\mathbb{T}^{m})$, in the case 
  $\beta = \min\{2, \tau\}=2$ and $2< p< \infty$.

Let $\beta = \min\{2, \tau\}=\tau$ and $1< p< \infty$. Then according to Lemma 1.1 and Lemma 1.4 \cite{4} from \eqref{eq2.20} and \eqref{eq2.22} we obtain
\begin{multline}\label{eq2.24}
I_{\overline{k}}\ll \prod_{j\in e}2^{-k_{j}\alpha_{j}}\biggl(\sum\limits_{0\leqslant \mu_{j}\leqslant k_{j}, j\in e}\prod_{j\in e}2^{\mu_{j}\alpha_{j}\tau} \biggl\|\biggl(\sum\limits_{\mu_{j}>k_{j}, j\in \hat{e}} |\delta_{\overline{\mu}}(f)|^{2}\biggr)^{1/2}\biggr\|_{p, \tau}^{\tau}\biggr)^{1/\tau}
\\
\ll \prod_{j\in e}2^{-k_{j}\alpha_{j}}\biggl(\sum\limits_{0\leqslant \mu_{j}\leqslant k_{j}, j\in e}\prod_{j\in e}2^{\mu_{j}\alpha_{j}\tau} \biggl\|\biggl(\sum\limits_{\nu_{j}>\mu_{j}, j\in e}\sum\limits_{\mu_{j}>k_{j}, j\in \hat{e}} |\delta_{\overline{\nu}^{e}, \overline{\mu}^{\hat{e}}}(f)|^{2}\biggr)^{1/2}\biggr\|_{p, \tau}^{\tau}\biggr)^{1/\tau}
\\
\ll \prod_{j\in e}2^{-k_{j}\alpha_{j}}\biggl(\sum\limits_{0\leqslant \mu_{j}\leqslant k_{j}, j\in e}\prod_{j\in e}2^{\mu_{j}\alpha_{j}\tau} \|f-U_{2^{\overline{\mu}^{e}}, 2^{\overline{k}^{\hat{e}}}}\|_{p, \tau}^{\tau} \biggr)^{1/\tau} 
\\
\ll \prod_{j\in e}2^{-k_{j}\alpha_{j}}\biggl(\sum\limits_{0\leqslant \mu_{j}\leqslant k_{j}, j\in e}\prod_{j\in e}2^{\mu_{j}\alpha_{j}\tau} Y_{2^{\overline{\mu}^{e}}, 2^{\overline{k}^{\hat{e}}}}^{\tau}(f)_{p, \tau} \biggr)^{1/\tau} 
\\
\ll \prod_{j=1}^{m}2^{-k_{j}\alpha_{j}}\biggl(\sum\limits_{\mu_{1}=0}^{k_{1}}...\sum\limits_{\mu_{m}=0}^{k_{m}}\prod_{j=1}^{m}2^{\mu_{j}\alpha_{j}\tau} Y_{2^{\mu_{1}},... 2^{\mu_{m}}}^{\tau}(f)_{p, \tau} \biggr)^{1/\tau}.
\end{multline} 
Now from the inequalities \eqref{eq2.15}, \eqref{eq2.16}, \eqref{eq2.18}, \eqref{eq2.23} in the case $\beta = \min\{2, \tau\}=2$ and $2< p< \infty$ we obtain
\begin{equation}\label{eq2.25}
\omega_{\overline\alpha}\Bigl(f,\frac{\pi}{n_{1}},\ldots, \frac{\pi}{n_{m}}\Bigr)_{p, \tau}  \ll
\prod_{j=1}^{m}2^{-k_{j}\alpha_{j}}\biggl(\sum\limits_{\mu_{1}=0}^{k_{1}}...\sum\limits_{\mu_{m}=0}^{k_{m}}\prod_{j=1}^{m}2^{\mu_{j}\alpha_{j}2} Y_{[2^{\mu_{1}-1}],... [2^{\mu_{m}-1}]}^{2}(f)_{p, \tau} \biggr)^{1/2}
\end{equation}
for a function  $f\in L_{p, \tau}(\mathbb{T}^{m})$.

If $\beta = \min\{2, \tau\}=\tau$ and $1< p< \infty$, then from inequalities \eqref{eq2.15}, \eqref{eq2.16}, \eqref{eq2.19} and \eqref{eq2.24} it follows that
\begin{equation}\label{eq2.26} 
\omega_{\overline\alpha}\Bigl(f,\frac{\pi}{n_{1}},\ldots, \frac{\pi}{n_{m}}\Bigr)_{p, \tau}  \ll
\prod_{j=1}^{m}2^{-k_{j}\alpha_{j}}\biggl(\sum\limits_{\mu_{1}=0}^{k_{1}}...\sum\limits_{\mu_{m}=0}^{k_{m}}\prod_{j=1}^{m}2^{\mu_{j}\alpha_{j}\tau} Y_{[2^{\mu_{1}-1}],... [2^{\mu_{m}-1}]}^{\tau}(f)_{p, \tau} \biggr)^{1/\tau}.
\end{equation}
Now it is easy to verify that the statement of Theorem 2.2 follows from inequalities \eqref{eq2.25}, \eqref{eq2.26}.
\hfill $\Box$ 
 
\begin{rem}\label{rem2.4} In the case $\tau=p$, $m=2$ Theorem 2.1 and Theorem 2.2 are proved in \cite{17}.
\end{rem}

\setcounter{equation}{0}
\setcounter{lemma}{0}
\setcounter{theorem}{0}

\section{On lacunary Fourier series}\label{sec3}

 Now let us consider the relationship between the mixed modulus of smoothness and the best approximation by ''angle'' of a function with a lacunary Fourier series.

\begin{op}\label{def6}  Let  
$1 < p < +\infty,$ $1\leq \tau< \infty$. By $\Lambda_{p ,\tau}$ we denote the set of all functions $f\in {\mathring L}_{p, \tau} (\mathbb{T}^{m})$ that have a lacunary Fourier series 
\begin{equation*} 
f(\overline{x}) \sim \sum_{\overline{\nu} \in \mathbb{Z}_{+}^{m}}\lambda_{\overline{\nu}} \prod_{j=1}^{m}\cos 2^{\nu_{j}}x_{j}, \,\, \, \, \lambda_{\overline{\nu}}\in \mathbb{R}.
\end{equation*}
\end{op}
\begin{rem}\label{rem3.1} In the case $\tau = p$ the class $\Lambda_{p ,\tau}$ is defined in \cite{17}
\end{rem}

\begin{theorem}\label{th3.1}
Let $f \in \Lambda_{p ,\tau}$, $1 < p < +\infty,$ $1< \tau< \infty$.

1. If  $1 < p < +\infty,$ $1< \tau< \infty$, then 
\begin{equation*}
\|f\|_{p, \tau}\ll \Bigl(\sum_{\overline{\nu} \in \mathbb{Z}_{+}^{m}}|\lambda_{\overline{\nu}}|^{2}\Bigr)^{1/2}
\end{equation*}

2. If $1 < p \leq 2$ and $1< \tau \leq 2$ or  $2< p< +\infty$ and $1< \tau< \infty$, then
\begin{equation*}
\Bigl(\sum_{\overline{\nu} \in \mathbb{Z}_{+}^{m}}|\lambda_{\overline{\nu}}|^{2}\Bigr)^{1/2}\ll \|f\|_{p, \tau}. 
\end{equation*}

3. If $1 < p \leq 2$ and $2< \tau < \infty$, then
\begin{equation*}
\Bigl(\sum_{\overline{\nu} \in \mathbb{Z}_{+}^{m}}|\lambda_{\overline{\nu}}|^{\tau}\Bigr)^{1/\tau}\ll \|f\|_{p, \tau}. 
\end{equation*}
\end{theorem}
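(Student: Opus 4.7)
My approach will combine the Littlewood--Paley decomposition in $L_{p,\tau}$ with the elementary observation that, for a lacunary series of the stated form, each dyadic block $\delta_{\overline{s}}(f)$ consists of a single rescaled product of cosines. Specifically, by the definition of $\rho(\overline{s})$ and the expansion of $\cos 2^{\nu_j}y$ in terms of $e^{\pm i2^{\nu_j}y}$, one has for $\overline{s}\in\mathbb{Z}_{+}^{m}$ with all $s_j\geq 1$
\begin{equation*}
\delta_{\overline{s}}(f, 2\pi\overline{x}) = \lambda_{\overline{s}-\overline{1}}\prod_{j=1}^{m}\cos(2\pi\cdot 2^{s_j-1}x_j),
\end{equation*}
while $\delta_{\overline{s}}(f)=0$ whenever some $s_j=0$. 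Two consequences are immediate: the pointwise bound $|\delta_{\overline{s}}(f)|\leq|\lambda_{\overline{s}-\overline{1}}|$, and, by a simple change of variables, the fact that the distribution of $\prod_{j}\cos(2\pi\cdot 2^{s_j-1}x_j)$ on $\mathbb{I}^{m}$ is independent of $\overline{s}$, so that $\|\delta_{\overline{s}}(f)\|_{p,\tau}\asymp|\lambda_{\overline{s}-\overline{1}}|$. My main tool will then be the two-sided Littlewood--Paley equivalence
\begin{equation*}
\|f\|_{p,\tau}\asymp\left\|\Bigl(\sum_{\overline{s}\in\mathbb{Z}_{+}^{m}}|\delta_{\overline{s}}(f)|^{2}\Bigr)^{1/2}\right\|_{p,\tau},\qquad 1<p,\tau<\infty,
\end{equation*}
which is standard for Lorentz spaces and underlies the Lemmas 1.1, 1.2, 1.4 of \cite{4} invoked in the proof of Theorem 2.2.

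For Part 1, pointwise
\begin{equation*}
\sum_{\overline{s}}|\delta_{\overline{s}}(f,2\pi\overline{x})|^{2}=\sum_{\overline{\nu}}|\lambda_{\overline{\nu}}|^{2}\prod_{j=1}^{m}\cos^{2}(2\pi\cdot 2^{\nu_j}x_j)\leq\sum_{\overline{\nu}}|\lambda_{\overline{\nu}}|^{2},
\end{equation*}
so the $L_{p,\tau}$-norm of the square function is at most $(\sum|\lambda_{\overline{\nu}}|^{2})^{1/2}\|1\|_{p,\tau}\ll(\sum|\lambda_{\overline{\nu}}|^{2})^{1/2}$. Littlewood--Paley then completes Part 1.

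For Part 2 with $2<p<\infty$, I would use the continuous embedding $L_{p,\tau}(\mathbb{T}^{m})\hookrightarrow L_{2}(\mathbb{T}^{m})$, valid on the finite-measure torus for $p>2$ and any $\tau$. Then
\begin{equation*}
\Bigl\|\bigl({\textstyle\sum}|\delta_{\overline{s}}(f)|^{2}\bigr)^{1/2}\Bigr\|_{p,\tau}\gg\Bigl\|\bigl({\textstyle\sum}|\delta_{\overline{s}}(f)|^{2}\bigr)^{1/2}\Bigr\|_{2}=\Bigl({\textstyle\sum}\|\delta_{\overline{s}}(f)\|_{2}^{2}\Bigr)^{1/2}=2^{-m/2}\Bigl({\textstyle\sum}|\lambda_{\overline{\nu}}|^{2}\Bigr)^{1/2},
\end{equation*}
where orthogonality of $\prod_{j}\cos(2\pi\cdot 2^{\nu_j}x_j)$ across distinct $\overline{\nu}$ yields the identity $\|\delta_{\overline{s}}(f)\|_{2}^{2}=|\lambda_{\overline{s}-\overline{1}}|^{2}/2^{m}$. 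For the remaining cases ($1<p\leq 2$, $1<\tau\leq 2$ in Part 2, and $1<p\leq 2$, $2<\tau<\infty$ in Part 3), the $L_{2}$-embedding is unavailable, so I would use instead the cotype-$\beta$ inequality in $L_{p,\tau}$ with $\beta=\max\{2,\tau\}$,
\begin{equation*}
\Bigl\|\bigl({\textstyle\sum}|\delta_{\overline{s}}(f)|^{2}\bigr)^{1/2}\Bigr\|_{p,\tau}\gg\Bigl({\textstyle\sum}\|\delta_{\overline{s}}(f)\|_{p,\tau}^{\beta}\Bigr)^{1/\beta},
\end{equation*}
which is the reverse direction of the vector-valued Minkowski-type inequalities of \cite{4} (Lemma 1.2 for $\beta=2$; Lemmas 1.1 and 1.4 for $\beta=\tau$). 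Substituting $\|\delta_{\overline{s}}(f)\|_{p,\tau}\asymp|\lambda_{\overline{s}-\overline{1}}|$ then produces the claimed $\ell^{2}$ and $\ell^{\tau}$ lower bounds, respectively.

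The main obstacle is establishing the reverse vector-valued estimate (i.e., the cotype structure of $L_{p,\tau}$) for $p\leq 2$. A conceptually simpler alternative, should this be delicate to reference, is a duality argument: since $(L_{p,\tau})^{*}=L_{p',\tau'}$ and Part 1 is already available for the dual exponents, one can lower-bound $\|f\|_{p,\tau}$ by $|\int f\overline{g}|/\|g\|_{p',\tau'}$ with a suitably chosen lacunary test polynomial $g$ (taking $\mu_{\overline{\nu}}=\lambda_{\overline{\nu}}$ for Part 2 and $\mu_{\overline{\nu}}=|\lambda_{\overline{\nu}}|^{\tau-1}\mathrm{sgn}\,\lambda_{\overline{\nu}}$ for Part 3), evaluating the integral by orthogonality of the cosine products.
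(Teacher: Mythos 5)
Your proposal is correct and follows essentially the same route as the paper: the Littlewood--Paley equivalence in $L_{p,\tau}$ together with the fact that each dyadic block reduces to a single cosine product (so $\|\delta_{\overline{s}}(f)\|_{p,\tau}\asymp|\lambda_{\overline{s}-\overline{1}}|$), the embedding $L_{p,\tau}\hookrightarrow L_{2}$ plus Parseval for $2<p<\infty$, and the reverse vector-valued (lower Littlewood--Paley-type) inequalities from \cite{4} for $1<p\leq 2$ with exponent $\max\{2,\tau\}$. Your duality fallback via $(L_{p,\tau})^{*}=L_{p',\tau'}$ is a valid self-contained alternative for the $p\leq 2$ cases, but it is not needed and is not what the paper does.
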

\begin{proof}
Since $\delta_{\overline{s}}(f, \overline{x}) = \lambda_{\overline{s}} \prod_{j=1}^{m}\cos 2^{s_{j}}x_{j}$, for the function $\Lambda_{p ,\tau}$, then by the Littlewood--Paley theorem in the Lorentz space (see \cite[Theorem 1.1]{25}) we have
\begin{equation*}
\|f\|_{p, \tau} \asymp \Bigl\|\Bigl(\sum_{\overline{s} \in \mathbb{Z}_{+}^{m}}\lambda_{\overline{s}}^{2}(\prod_{j=1}^{m}\cos 2^{s_{j}}x_{j})^{2} \Bigr)^{1/2}\Bigr\|_{p, \tau}\ll \Bigl(\sum_{\overline{s} \in \mathbb{Z}_{+}^{m}}\lambda_{\overline{s}}^{2}\Bigr)^{1/2},
\end{equation*}
for  $1 < p < +\infty,$ $1< \tau< \infty$. The first statement is proved.

If $2< p< +\infty$, then $L_{p, \tau}(\mathbb{T}^{m}) \subset L_{2}(\mathbb{T}^{m})$ and $\|f\|_{2}\ll \|f\|_{p, \tau}$, for $1< \tau< \infty$. Therefore, by Parseval's equality, we have
\begin{equation*}
\|f\|_{p, \tau}\gg \Bigl(\sum_{\overline{s} \in \mathbb{Z}_{+}^{m}}\lambda_{\overline{s}}^{2}\Bigr)^{1/2},
\end{equation*}
for  $2< p< +\infty$, $1< \tau< \infty$.

If $1 < p \leq 2$ and $1< \tau \leq 2$, then from \cite[Lemma 1.5 and Lemma 1.6]{4} it follows that
\begin{equation*}
\|f\|_{p, \tau} \gg \Bigl(\sum\limits_{\overline{s} \in \mathbb{Z}_{+}^{m}}\|\delta_{\overline{s}}(f)\|_{p, \tau}^{2}\Bigr)^{1/2} \gg \Bigl(\sum_{\overline{s} \in \mathbb{Z}_{+}^{m}}\lambda_{\overline{s}}^{2}\Bigr)^{1/2}.
\end{equation*}
If $1 < p \leq 2$ and $2< \tau < \infty$, then by \cite[Lemma 1.5 and Lemma 1.6]{4} we obtain
\begin{equation*}
\|f\|_{p, \tau} \gg \Bigl(\sum\limits_{\overline{s} \in \mathbb{Z}_{+}^{m}}\|\delta_{\overline{s}}(f)\|_{p, \tau}^{\tau}\Bigr)^{1/\tau} \gg \Bigl(\sum_{\overline{s} \in \mathbb{Z}_{+}^{m}}|\lambda_{\overline{s}}|^{\tau}\Bigr)^{1/\tau}.
\end{equation*}
\end{proof}

\begin{rem}\label{rem3.2}
In the case $\tau = p$, Theorem 3.1 coincides with Lemma 3. 9 \cite{17}.
\end{rem}
 
\begin{theorem}\label{th3.2} 
Let $f \in \Lambda_{p ,\tau}$, $1 < p < +\infty,$ $1< \tau< \infty$ and $\alpha_{j}> 0$ for $j=1,\ldots,m$.
Then 
\begin{multline}\label{eq3.1}
 \omega_{\overline\alpha}\Bigl(f,\frac{\pi}{n_{1}},\ldots, \frac{\pi}{n_{m}}\Bigr)_{p, \tau}  \ll \prod_{j=1}^{m}2^{-n_{j}\alpha_{j}}\biggl(\sum\limits_{k_{1}=0}^{n_{1}}...\sum\limits_{k_{m}=0}^{n_{m}}\prod_{j=1}^{m}2^{k_{j}\alpha_{j}2}|\lambda_{\overline{k}}|^{2} \biggr)^{1/2}
 \\
  + \sum_{\substack{e\subset e_{m},\\ e\neq \emptyset}} \prod_{j\in e}2^{-n_{j}\alpha_{j}}\biggl(\sum\limits_{\overline{k}\in G_{\overline{n}}(e)}|\lambda_{\overline{k}}|^{2}\prod_{j\in e}2^{-k_{j}\alpha_{j}2} \biggr)^{1/2} + \biggl(\sum\limits_{k_{1}=n_{1}+1}^{\infty}...\sum\limits_{k_{m}=n_{m}+1}^{\infty}|\lambda_{\overline{k}}|^{2} \biggr)^{1/2}.
\end{multline} 
If $1<p\leqslant 2$ and $1<\tau \leqslant 2$ or $2< p<\infty$ and $1<\tau <\infty$, then for 
the function $f \in \Lambda_{p ,\tau}$ the opposite inequality to \eqref{eq3.1} is true.
 \end{theorem}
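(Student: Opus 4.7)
Proof plan.

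The strategy is to establish the upper bound for all $1<p<\infty$, $1<\tau<\infty$ by a direct frequency-block decomposition of the lacunary series, and then to obtain the matching reverse inequality in the restricted range by feeding Theorem~\ref{th3.1} into Theorem~\ref{th1.1}. For the upper bound I would decompose $f=\sum_{e\subseteq e_m}f_e$, where $f_e(\overline x)=\sum_{\overline k\in G_{\overline n}(e)}\lambda_{\overline k}\prod_{j=1}^m\cos 2^{k_j}x_j$, using the conventions $G_{\overline n}(\emptyset)=\{\overline k\colon k_j>n_j\ \forall j\}$ and $G_{\overline n}(e_m)=\{\overline k\colon k_j\le n_j\ \forall j\}$. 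Subadditivity of the modulus (Lemma~\ref{lem1 1}.2) then reduces the task to bounding $\omega_{\overline\alpha}(f_e,\pi/2^{n_1},\ldots,\pi/2^{n_m})_{p,\tau}$ for each~$e$ by the block of the right-hand side associated with it.

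For $e=\emptyset$ the trivial estimate $\omega_{\overline\alpha}(f_\emptyset,\cdot)_{p,\tau}\ll\|f_\emptyset\|_{p,\tau}$ together with Theorem~\ref{th3.1}, part~1, delivers the tail term. For $e\neq\emptyset$ I would factor $\Delta_{\overline h}^{\overline\alpha}=\Delta_{\overline h^{\hat e}}^{\overline\alpha^{\hat e}}\,\Delta_{\overline h^e}^{\overline\alpha^e}$; the outer operator is bounded on $L_{p,\tau}$ uniformly in $\overline h$ by the triangle inequality and the absolute convergence of $\sum_{\nu}|\binom{\alpha_j}{\nu}|$, using shift-invariance of the Lorentz norm. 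Since $\Delta_h^\alpha$ acts multiplicatively on trigonometric exponentials, with $\Delta_h^\alpha\cos(\beta x)=(2\sin(\beta h/2))^\alpha\cos(\beta x+\alpha\beta h/2+\pi\alpha/2)$, the function $\Delta_{\overline h^e}^{\overline\alpha^e}f_e$ remains lacunary on $G_{\overline n}(e)$ with coefficients of modulus $|\lambda_{\overline k}|\prod_{j\in e}|2\sin(2^{k_j-1}h_j)|^{\alpha_j}$. Applying Theorem~\ref{th3.1}, part~1, and the elementary bound $|\sin(2^{k_j-1}h_j)|\le 2^{k_j-1}|h_j|$ valid when $k_j\le n_j$ and $|h_j|\le\pi/2^{n_j}$, taking the supremum yields
\begin{equation*}
\omega_{\overline\alpha}(f_e,\pi/2^{n_1},\ldots,\pi/2^{n_m})_{p,\tau}\ll\prod_{j\in e}2^{-n_j\alpha_j}\Bigl(\sum_{\overline k\in G_{\overline n}(e)}|\lambda_{\overline k}|^2\prod_{j\in e}2^{2k_j\alpha_j}\Bigr)^{1/2},
\end{equation*}
which is the first term of \eqref{eq3.1} when $e=e_m$ and the $e$-th summand of the middle sum when $\emptyset\neq e\subsetneq e_m$. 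Summing over $e\subseteq e_m$ finishes the upper bound.

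For the reverse inequality I would invoke Theorem~\ref{th1.1} with $n_j$ replaced by $2^{n_j}$, obtaining
\begin{equation*}
\omega_{\overline\alpha}(f,\pi/2^{n_1},\ldots,\pi/2^{n_m})_{p,\tau}\asymp\|f-U_{2^{\overline n}}(f)\|_{p,\tau}+\sum_{\emptyset\neq e\subseteq e_m}\prod_{j\in e}2^{-n_j\alpha_j}\bigl\|S_{2^{\overline n^e},\infty}^{(\overline\alpha^e)}(f-S_{\infty,2^{\overline n^{\hat e}}}(f))\bigr\|_{p,\tau},
\end{equation*}
and then estimate each term from below. The piece $\|f-U_{2^{\overline n}}(f)\|_{p,\tau}$ is the norm of the lacunary series supported on $G_{\overline n}(\emptyset)$, and Theorem~\ref{th3.1}, part~2, whose hypotheses are exactly the stated restrictions on $(p,\tau)$, gives the tail lower bound. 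For each $e\neq\emptyset$ the function $S_{2^{\overline n^e},\infty}^{(\overline\alpha^e)}(f-S_{\infty,2^{\overline n^{\hat e}}}(f))$ is lacunary on a region that contains $G_{\overline n}(e)$, with coefficients $|\lambda_{\overline k}|\prod_{j\in e}2^{k_j\alpha_j}$; restricting the Littlewood–Paley square function to $G_{\overline n}(e)$ and applying Theorem~\ref{th3.1}, part~2, yields the corresponding $e$-th summand from below.

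The principal obstacle is the combinatorial bookkeeping of frequency supports—confirming which regions $G_{\overline n}(e')$ intersect the spectrum of $S_{2^{\overline n^e},\infty}(f-S_{\infty,2^{\overline n^{\hat e}}}(f))$ so that $G_{\overline n}(e)$ is available for the lower bound—and, in the upper bound, verifying that Theorem~\ref{th3.1} continues to apply to lacunary series built from phase-shifted cosines (rather than pure cosines), which is immediate because each block $\lambda_{\overline k}\prod_j\cos(2^{k_j}x_j+\varphi_j)$ still has Lorentz norm comparable to $|\lambda_{\overline k}|$ by the same Littlewood–Paley argument.
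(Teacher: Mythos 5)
Your proposal is correct, and its lower-bound half coincides with the paper's argument: both feed the two-sided coefficient estimates of Theorem \ref{th3.1} into the equivalence of Theorem 2.1 applied at $2^{n_{j}}$ (the paper's displays \eqref{eq3.5}--\eqref{eq3.7}), which is exactly where the restriction on $(p,\tau)$ enters. The upper bound, however, is a genuinely different route. The paper derives the upper bound from Theorem 2.1 as well: it estimates $\|S_{2^{\overline{n}}}^{(\overline{\alpha})}(f)\|_{p,\tau}$, $\|S_{2^{\overline{n}^{e}},\infty}^{(\overline{\alpha}^{e})}(f-S_{\infty,2^{\overline{n}^{\hat{e}}}}(f))\|_{p,\tau}$ and $\|f-U_{2^{\overline{n}}}(f)\|_{p,\tau}$ via the Littlewood--Paley bound for fractional derivatives (Theorem 2.1 of \cite{2}) combined with part 1 of Theorem \ref{th3.1}, and then substitutes into the $\ll$ direction of Theorem 2.1. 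You instead split $f=\sum_{e}f_{e}$ over the regions $G_{\overline{n}}(e)$, use subadditivity of the modulus, and compute the action of $\Delta_{\overline{h}^{e}}^{\overline{\alpha}^{e}}$ on each lacunary harmonic explicitly, gaining the factor $\prod_{j\in e}(2|\sin(2^{k_{j}-1}h_{j})|)^{\alpha_{j}}\ll\prod_{j\in e}(2^{k_{j}}|h_{j}|)^{\alpha_{j}}$ and then invoking only part 1 of Theorem \ref{th3.1}; this bypasses Theorem 2.1 and the derivative estimates of \cite{2} entirely, at the cost of the multiplier computation and of checking that part 1 of Theorem \ref{th3.1} survives phase shifts, which you correctly observe it does. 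Your computation also confirms what the paper's own display \eqref{eq3.3} shows, namely that the exponent in the middle term of \eqref{eq3.1} should read $2^{+2k_{j}\alpha_{j}}$ and that the modulus on the left should be evaluated at $\pi/2^{n_{j}}$; the printed statement carries sign and scale typos that your argument silently corrects.
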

\begin{proof}
According to Theorem 2.1 \cite{2} and Theorem 3.1 we have
 \begin{multline}\label{eq3.2}
\|S_{2^{n_{1}},...,2^{n_{m}}}^{(\overline{\alpha})}(f)\|_{p, \tau}
\ll \Biggl\|\Biggl(\sum\limits_{s_{1}=1}^{n_{1}}...\sum\limits_{s_{m}=1}^{n_{m}}\prod_{j=1}^{m}2^{s_{j}\alpha_{j}2}|\delta_{\overline{s}}(f)|^{2}\Biggr)^{1/2} \Biggr\|_{p, \tau} 
\\
\ll 
\Biggl(\sum\limits_{s_{1}=1}^{n_{1}}...\sum\limits_{s_{m}=1}^{n_{m}}\prod_{j=1}^{m}2^{s_{j}\alpha_{j}2}|\lambda_{\overline{s}}|^{2}\Biggr)^{1/2},
\end{multline} 
\begin{multline}\label{eq3.3} 
\|S_{2^{\overline{n}^{e}, \infty}}^{(\overline{\alpha}^{e})}(f-S_{\infty, 2^{\overline{n}^{\hat{e}}}}(f)\|_{p, \tau}
\ll \biggl\|\sum\limits_{\overline{s}\in G_{\overline{n}}(e)}\delta_{\overline{s}}^{(\overline{\alpha}^{e})}(f) \biggr\|_{p, \tau}
\\ 
 \ll  \biggl\|\biggl(\sum\limits_{\overline{s}\in G_{\overline{n}}(e)}\prod_{j\in e}2^{s_{j}\alpha_{j}2} |\delta_{\overline{s}}(f)|^{2}\biggr)^{1/2} \biggr\|_{p, \tau} \ll \biggl(\sum\limits_{\overline{s}\in G_{\overline{n}}(e)}\prod_{j\in e}2^{s_{j}\alpha_{j}2} |\lambda_{\overline{s}}|^{2}\biggr)^{1/2}, 
\end{multline}
\begin{equation}\label{eq3.4} 
\|f-U_{2^{n_{1}},...,2^{n_{m}}} (f)\|_{p, \tau} =\biggl\|\sum\limits_{s_{1}=n_{1}+1}^{\infty}...\sum\limits_{s_{m}=n_{m}+1}^{\infty} \delta_{\overline{s}}(f) \biggr\|_{p, \tau} \ll \biggl(\sum\limits_{s_{1}=n_{1}+1}^{\infty}...\sum\limits_{s_{m}=n_{m}+1}^{\infty}|\lambda_{\overline{s}}|^{2}  \biggr)^{1/2}
\end{equation}
for a function   $f \in \Lambda_{p ,\tau}$, $1 < p < +\infty,$ $1< \tau< \infty$.
 Now, according to Theorem 2.1, inequalities \eqref{eq3.2}--\eqref{eq3.4} imply inequality \eqref{eq3.1}.
 
 In the case 
$1<p\leqslant 2$ and $1<\tau \leqslant 2$ or $2< p<\infty$ and $1<\tau <\infty$, according to the second assertion of Theorem 3.1 for a function $f \in \Lambda_{p ,\tau}$ the following inequalities hold 
 \begin{multline}\label{eq3.5}
\|S_{2^{n_{1}},...,2^{n_{m}}}^{(\overline{\alpha})}(f)\|_{p, \tau}
\gg \Biggl\|\Biggl(\sum\limits_{s_{1}=1}^{n_{1}}...\sum\limits_{s_{m}=1}^{n_{m}}\prod_{j=1}^{m}2^{s_{j}\alpha_{j}2}|\delta_{\overline{s}}(f)|^{2}\Biggr)^{1/2} \Biggr\|_{p, \tau} 
\\
\gg 
\Biggl(\sum\limits_{s_{1}=1}^{n_{1}}...\sum\limits_{s_{m}=1}^{n_{m}}\prod_{j=1}^{m}2^{s_{j}\alpha_{j}2}|\lambda_{\overline{s}}|^{2}\Biggr)^{1/2},
\end{multline} 
\begin{multline}\label{eq3.6} 
\|S_{2^{\overline{n}^{e}, \infty}}^{(\overline{\alpha}^{e})}(f-S_{\infty, 2^{\overline{n}^{\hat{e}}}}(f)\|_{p, \tau}
\gg \biggl\|\sum\limits_{\overline{s}\in G_{\overline{n}}(e)}\delta_{\overline{s}}^{(\overline{\alpha}^{e})}(f) \biggr\|_{p, \tau}
\\ 
 \gg  \biggl\|\biggl(\sum\limits_{\overline{s}\in G_{\overline{n}}(e)}\prod_{j\in e}2^{s_{j}\alpha_{j}2} |\delta_{\overline{s}}(f)|^{2}\biggr)^{1/2} \biggr\|_{p, \tau} \gg \biggl(\sum\limits_{\overline{s}\in G_{\overline{n}}(e)}\prod_{j\in e}2^{s_{j}\alpha_{j}2} |\lambda_{\overline{s}}|^{2}\biggr)^{1/2}, 
\end{multline}
\begin{equation}\label{eq3.7} 
\|f-U_{2^{n_{1}},...,2^{n_{m}}} (f)\|_{p, \tau} =\biggl\|\sum\limits_{s_{1}=n_{1}+1}^{\infty}...\sum\limits_{s_{m}=n_{m}+1}^{\infty} \delta_{\overline{s}}(f) \biggr\|_{p, \tau} \gg \biggl(\sum\limits_{s_{1}=n_{1}+1}^{\infty}...\sum\limits_{s_{m}=n_{m}+1}^{\infty}|\lambda_{\overline{s}}|^{2}  \biggr)^{1/2}.
\end{equation}
Therefore, by virtue of Theorem 2.1, the desired statement follows from inequalities \eqref{eq3.5}--\eqref{eq3.7}.
\end{proof}
 
\begin{theorem}\label{th3.3} 
Let  $f \in \Lambda_{p, \tau}$, $1<p\leqslant 2$ and $1<\tau \leqslant 2$ or $2< p<\infty$ and $1<\tau <\infty$, $\alpha_{j}> 0$ for $j=1,\ldots,m$. Then
 \begin{equation*}  
\omega_{\overline\alpha}\Bigl(f,\frac{1}{n_{1}},\ldots, \frac{1}{n_{m}}\Bigr)_{p, \tau}  \asymp
\prod_{j=1}^{m}n^{-\alpha_{j}}\biggl(\sum\limits_{\nu_{1}=1}^{n_{1}+1}...\sum\limits_{\nu_{m}=1}^{n_{m}+1}\prod_{j=1}^{m}\nu_{j}^{2\alpha_{j}-1} Y_{\nu_{1}-1,... \nu_{m}-1}^{2}(f)_{p, \tau} \biggr)^{1/2}.
\end{equation*}
 \end{theorem}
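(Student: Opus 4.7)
The plan is to derive Theorem 3.3 from Theorem 3.2 by first establishing an auxiliary characterization of the best angular approximation $Y_{\overline n}(f)_{p,\tau}$ for lacunary $f$ in terms of an $\ell^{2}$ tail of its coefficients, and then performing a dyadic-to-integer resummation to match the two right-hand sides.

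The central auxiliary step is to show that, for $f \in \Lambda_{p,\tau}$ in the admissible range,
\begin{equation*}
Y_{\overline n}(f)_{p,\tau}^{2} \asymp \sum_{\overline\nu \in \mathbb{Z}_{+}^{m}:\, 2^{\nu_{j}} > n_{j},\, j=1,\ldots,m}|\lambda_{\overline\nu}|^{2}.
\end{equation*}
The lower estimate follows from Lemma 1.5, which gives $\|f - U_{\overline n}(f)\|_{p,\tau} \ll Y_{\overline n}(f)_{p,\tau}$, combined with Theorem 3.1 applied to the (still lacunary) remainder $f - U_{\overline n}(f) = \sum_{2^{\nu_{j}} > n_{j},\, \forall j}\lambda_{\overline\nu}\prod_{j}\cos 2^{\nu_{j}}x_{j}$. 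For the upper estimate I would establish $Y_{\overline n}(f)_{p,\tau} \le \|f - U_{\overline n}(f)\|_{p,\tau}$ by constructing an explicit angular decomposition of $U_{\overline n}(f)$: set $T_{1} = S_{n_{1},\infty}(f)$ and recursively $T_{j} = S_{\infty,\ldots,\infty,n_{j},\infty,\ldots,\infty}(f - T_{1} - \cdots - T_{j-1})$, so that each $T_{j}$ has trigonometric degree $\le n_{j}$ in $x_{j}$. A comparison of Fourier supports (each mode with $|k_{j}| \le n_{j}$ for at least one $j$ is captured exactly once, by the smallest such $j$) shows that $\sum_{j=1}^{m}T_{j} = U_{\overline n}(f)$, and Theorem 3.1 again converts the norm of the remainder into the $\ell^{2}$-sum above.

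With this characterization in hand, I would substitute $Y^{2}$ into the right-hand side of Theorem 3.3, interchange summations, and use the elementary estimate $\sum_{\nu=1}^{M}\nu^{2\alpha-1} \asymp M^{2\alpha}$ to obtain
\begin{equation*}
\prod_{j=1}^{m}n_{j}^{-2\alpha_{j}}\sum_{\nu_{1}=1}^{n_{1}+1}\ldots\sum_{\nu_{m}=1}^{n_{m}+1}\prod_{j=1}^{m}\nu_{j}^{2\alpha_{j}-1}Y_{\nu_{1}-1,\ldots,\nu_{m}-1}^{2}(f)_{p,\tau} \asymp \sum_{\overline s \in \mathbb{Z}_{+}^{m}}|\lambda_{\overline s}|^{2}\prod_{j=1}^{m}\min\bigl(1,\,(2^{s_{j}}/n_{j})^{2\alpha_{j}}\bigr).
\end{equation*}
Partitioning the $\overline s$-sum according to the subset $e = \{j:\,2^{s_{j}} \le n_{j}\}$ then yields, term by term, the three kinds of expressions in the right-hand side of Theorem 3.2: the full subset $e = e_{m}$ produces its first dyadic sum, $e = \emptyset$ produces its tail term, and nonempty proper subsets produce the $G_{\overline n}(e)$-indexed sums. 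Because Theorem 3.2 holds two-sidedly throughout the parameter range in which Theorem 3.3 is stated, the required equivalence for $\omega_{\overline\alpha}(f, 1/n_{1}, \ldots, 1/n_{m})_{p,\tau}$ follows.

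The main obstacle is the upper-bound half of the auxiliary characterization of $Y_{\overline n}$: namely, exhibiting $U_{\overline n}(f)$ itself as an admissible angular sum $\sum_{j=1}^{m}T_{n_{j}}$ for arbitrary $m$. The recursive construction above accomplishes this, but one must verify on the Fourier side that no modes are missed or doubly counted, so that $\sum_{j=1}^m T_{j}$ coincides precisely with $U_{\overline n}(f)$. Once this structural fact is in place, the remainder of the proof is routine dyadic bookkeeping supported by Theorems 3.1 and 3.2.
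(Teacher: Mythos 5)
Your proposal is correct and follows essentially the same route as the paper: both arguments use Lemma 1.5 together with Theorem 3.1 to identify $Y_{\overline{\nu}}(f)_{p,\tau}^{2}$ with a tail sum of $|\lambda_{\overline{s}}|^{2}$, interchange the order of summation to show that the right-hand side of Theorem 3.3 is equivalent to the three-term expression appearing in Theorem 3.2, and then invoke both directions of Theorem 3.2. Your explicit verification that $U_{\overline{n}}(f)$ is an admissible angular sum (so that $Y_{\overline{n}}(f)_{p,\tau}\leqslant\|f-U_{\overline{n}}(f)\|_{p,\tau}$) is a detail the paper leaves implicit, but the substance of the two proofs is the same.
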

\begin{proof}
We introduce the notation  
 \begin{equation*}
 I_{\overline{n}}(f)_{p, \tau}:=\prod_{j=1}^{m}n^{-\alpha_{j}}\biggl(\sum\limits_{\nu_{1}=1}^{n_{1}+1}...\sum\limits_{\nu_{m}=1}^{n_{m}+1}\prod_{j=1}^{m}\nu_{j}^{2\alpha_{j}-1} Y_{\nu_{1}-1,... \nu_{m}-1}^{2}(f)_{p, \tau} \biggr)^{1/2}.
 \end{equation*}
For $n_{j}\in \mathbb{N}$, choose a non-negative integer $l_{j}$ such that $2^{l_{j}}\leqslant n_{j}< 2^{l_{j}+1}$, $j=1,\ldots,m$. Then, by the property of monotone decrease of the sequence $\{Y_{\nu_{1},... \nu_{m}}(f)_{p, \tau}\}$ for each index $\nu_{j}$ we have
  \begin{equation}\label{eq3.8}
 I_{\overline{n}}(f)_{p, \tau} \asymp \prod_{j=1}^{m}2^{-l_{j}\alpha_{j}}\biggl(\sum\limits_{k_{1}=1}^{l_{1}+1}...\sum\limits_{k_{m}=1}^{l_{m}+1}\prod_{j=1}^{m}2^{k_{j}2\alpha_{j}}Y_{2^{k_{1}-1},... ,2^{k_{m}-1}}^{2}(f)_{p, \tau} \biggr)^{1/2}.
\end{equation} 
Further, using Lemma 1.5 and the first statement of Theorem 3.1 from \eqref{eq3.8} we obtain 
   \begin{equation}\label{eq3.9}
 I_{\overline{n}}^{2}(f)_{p, \tau} \ll \prod_{j=1}^{m}2^{-l_{j}\alpha_{j}2}\sum\limits_{k_{1}=1}^{l_{1}+1}...\sum\limits_{k_{m}=1}^{l_{m}+1}\prod_{j=1}^{m}2^{k_{j}2\alpha_{j}}\sum\limits_{\nu_{1}=k_{1}}^{\infty}...\sum\limits_{\nu_{m}=k_{m}}^{\infty}|\lambda_{\overline{\nu}}|^{2} 
\end{equation} 
for a function 
 $f \in \Lambda_{p ,\tau}$.
Now, on the right-hand side of inequality \eqref{eq3.9}, changing the order of summation and taking into account that $\alpha_{j}>0$ for $j=1,\ldots,m$ we obtain
    \begin{multline}\label{eq3.10}
 I_{\overline{n}}^{2}(f)_{p, \tau} \ll \prod_{j=1}^{m}2^{-l_{j}\alpha_{j}2}\sum\limits_{\nu_{1}=1}^{l_{1}}...\sum\limits_{\nu_{m}=1}^{l_{m}}\prod_{j=1}^{m}2^{\nu_{j}\alpha_{j}2}|\lambda_{\overline{\nu}}|^{2}
 \\
  + \sum_{\substack{e\subset e_{m},\\ e\neq \emptyset}} \prod_{j\in e}2^{-l_{j}\alpha_{j}2}\sum\limits_{\overline{\nu}\in G_{\overline{l}}(e)}|\lambda_{\overline{\nu}}|^{2}\prod_{j\in e}2^{\nu_{j}\alpha_{j}2}  + \sum\limits_{k_{1}=l_{1}+1}^{\infty}...\sum\limits_{k_{m}=l_{m}+1}^{\infty}|\lambda_{\overline{k}}|^{2}
\end{multline} 
for a function $f \in \Lambda_{p ,\tau}$.  Therefore, according to the second statement of Theorem 3.2 and inequality \eqref{eq3.10}, the inequality holds
 \begin{equation*}
I_{\overline{n}}(f)_{p, \tau} \ll \omega_{\overline\alpha}\Bigl(f,\frac{1}{n_{1}},\ldots, \frac{1}{n_{m}}\Bigr)_{p, \tau}
\end{equation*}
for a function $f \in \Lambda_{p ,\tau}$ in the case 
  $1<p\leqslant 2$ and $1<\tau \leqslant 2$ or $2< p<\infty$ and $1<\tau <\infty$. 
  
Taking into account the second assertion of Theorem 3.1, similarly to the proof of \eqref{eq3.10}, we can verify that
    \begin{multline}\label{eq3.11}
 I_{\overline{n}}^{2}(f)_{p, \tau} \gg \prod_{j=1}^{m}2^{-l_{j}\alpha_{j}2}\sum\limits_{\nu_{1}=1}^{l_{1}}...\sum\limits_{\nu_{m}=1}^{l_{m}}\prod_{j=1}^{m}2^{\nu_{j}\alpha_{j}2}|\lambda_{\overline{\nu}}|^{2}
 \\
  + \sum_{\substack{e\subset e_{m},\\ e\neq \emptyset}} \prod_{j\in e}2^{-l_{j}\alpha_{j}2}\sum\limits_{\overline{\nu}\in G_{\overline{l}}(e)}|\lambda_{\overline{\nu}}|^{2}\prod_{j\in e}2^{\nu_{j}\alpha_{j}2}  + \sum\limits_{k_{1}=l_{1}+1}^{\infty}...\sum\limits_{k_{m}=l_{m}+1}^{\infty}|\lambda_{\overline{k}}|^{2}
\end{multline}
  for a function    $f \in \Lambda_{p ,\tau}$ in the case  
   $1<p\leqslant 2$ and $1<\tau \leqslant 2$ or $2< p<\infty$ and $1<\tau <\infty$.
Now, using the first assertion of Theorem 3.2 and inequality \eqref{eq3.11}, we obtain
$\omega_{\overline\alpha}\Bigl(f,\frac{1}{n_{1}},\ldots, \frac{1}{n_{m}}\Bigr)_{p, \tau}\ll I_{\overline{n}}(f)_{p, \tau}$ 
for a function $f \in \Lambda_{p ,\tau}$ in the case $1<p\leqslant 2$ and $1<\tau \leqslant 2$ or $2< p<\infty$ and $1<\tau <\infty$. 
 \end{proof}
\begin{rem}\label{rem3.3}
In the case $\tau = p$, Theorem 3.2 and Theorem 3.3 were previously proved in \cite[Theorem 7.3 and Theorem 9.3]{17}.
\end{rem}  
  

\begin{flushleft}
Kazakhstan Branch , Lomonosov Moscow University\\
St. Kazhymukan, 11 \\
010010, Astana, Kazakhstan 
 E-mail: akishev\_g@mail.ru
\end{flushleft}

\begin{thebibliography}{99}

\bibitem{1} Akgun R. Approximation by polynomials in rearrangement invariant quasi Banach function spaces// Banach J. Math. Anal.,  6(2), 113-Ц131 (2012).

\bibitem{2}  Akishev G. Inequalities for the best approximation ''angle'' and the modulus
of smoothness of a function in Lorentz space, Results of Science and Technology. Series Sovrem.
mat. and its appl. Subject Review. , 230, 8--
24 (2023) (in Russian).
DOI: 10.36535/0233-6723-2023-230-8-24 
 
\bibitem{3}   Akishev G. On embedding theorems for function spaces with mixed logarithmic smoothness. Rend. Circ. Mat. Palermo, II. Ser 74, 22 (2025). https://doi.org/10.1007/s12215-024-01177-2.

\bibitem{4} 
Akishev G. Estimates for the best approximations of functions from the NikolСskiiЦBesov class in
the Lorentz space by trigonometric polynomials. Trudy Instituta Matematiki i Mekhaniki UrO RAN,
26(2), 5Ц27(2020), https://doi.org/10.21538/0134-4889-2020-26-2-5-27

\bibitem{5}  Akishev G. Estimates of the best approximations of the functions
of the NikolТskii--Besov class in the generalized space of Lorentz// Advances in Operator Theory, 1--36 (2020). https://doi.org/10.1007/s43036-020-00108-z

\bibitem{6}  Amanov T.I. Spaces of diferentiable functions with dominating mixed derivatives. Nauka, Alma-Ata (1976).
 
 \bibitem{7} Bennet C., Sharpley R. Interpolation of operators. Academic Press, New York, (1988). 
 
 \bibitem{8}  Butzer P.L.,  Dyckhoff H.,  Gorlich E.,   Stens R.L. Best trigonometric approximation, fractional order derivatives and  Lipschitz classes// Can. J. Math., 29(4), 781--793 (1977).

\bibitem{9} Ivanov V.I. Direct and inverse theorems in approximation theory for periodic functions in S.B. StechkinТs papers and the development of these theorems. Proc. Steklov Inst. Math. 273 (Suppl 1), 1Ц13 (2011). https://doi.org/10.1134/S0081543811050014 

\bibitem{10}  Kolomoitsev Y., Tikhonov S. Properties of moduli of smoothness in $L_ {p}(R^{d})$. // J. Approx. Th.  257, article 105423, 1--29 (2020),  
https://doi.org/10.1016/j.jat.2020.105423.

\bibitem{11} Kolomoitsev Y., Tikhonov S. Hardy--Littlewood and Ul'yanov inequalities// Memoirs American Math. Soc. May 2021, 271, Number 1325, 132 pp. 
 
 \bibitem{12} Kolomoitsev Y., Tikhonov S. MarcinkiewiczЦ-Zygmund inequalities
in quasi-Banach function spaces// Preprint. November 2024,
DOI: 10.48550/arXiv.2411.04119.
 
 \bibitem{13} Nikol'skii S.M. Approximation of functions of several variables and embedding theorems. Springer, Berlin (1977).

\bibitem{14}  Potapov M.K. On ''angular'' approximation, Proc. Conf. Constructive Function Theory (Budapest, 1969), Akad. Kiado, Budapest, 371--379  (1971). 

\bibitem{15} Potapov M. K. Approximation by ''angle'', and imbedding theorems (Russian). Math. Balkanica. 2, 183--198 (1972).

\bibitem{16}  Potapov  M. K. The study of certain classes of functions by means of ''angular'' approximation, Proc. Steklov Inst. Math. 117, 301--342 (1972).

\bibitem{17} Potapov M. K., Simonov B. V., Tikhonov S. Yu. 
Mixed Moduli of Smoothness in $L_{p}$, $1 < p < \infty$:
A Survey. //Surveys in Approximation Theory.  8,  1--57 (2013).

\bibitem{18} Potapov M.K.,  Simonov B.V., Properties of mixed moduli of smoothness of functions with lacunary Fourier coefficients,
Moscow University Mathematics Bulletin, 69(1), 5--15 (2014).
 
 \bibitem{19} Runovskii K.V., OmelТchenko N.V. Mixed generalized modulus of smoothness and approximation by ''angle'' of trigonometric polynomials. Math Notes 100, 448Ц457 (2016). https://doi.org/10.1134/S000143461609011X

\bibitem{20} Simonov B., Tikhonov S. Sharp UlТyanov-type inequalities using fractional smoothness//Journal of Approximation Theory, 162, 1654Ц1684 (2010).

\bibitem{21} Smailov E.S.,  Akishev G., Esmaganbetov M.G. Theory of approximation and embedding of classes of functions of several variables. Karaganda (1986) 
(in Russian)

\bibitem{22} Stein E.M., Weiss G.  Introduction to Fourier analysis on
Euclidean spaces. Princeton: Princeton Univ. Press (1971)

\bibitem{23}
Taberski R. Differences, moduli and derivatives of fractional orders// Comment. Math. Prace Mat. 19(2) 389Ц400 (1976Ц1977).

\bibitem{24}  Timan A. F., Theory of Approximation of Functions of a Real Variable, Pergamon Press: Oxford (1963).

\bibitem{25}  Timan M.F. Approximation and properties of periodic functions. ''Poligrafist'', Dnepropetrovsk (2000). 

\bibitem{26} Vinogradov O. Direct and inverse theorems of approximation theory in Banach function spaces// St. Peterburg Math. Jour. 35, 907--928 (2024).
\end{thebibliography}
\end{document}